\documentclass[12pt]{article}
\usepackage{amsmath, amssymb, amsthm, graphicx, tikz}
\usepackage[margin=1in]{geometry}
\usepackage{enumitem}
\usepackage{forest}
\usepackage{tikz}
\usetikzlibrary{trees}
\usepackage{setspace}
\usepackage{xcolor}
\usepackage{tcolorbox}
\usepackage{float}
\usetikzlibrary{backgrounds}   
\usetikzlibrary{positioning}    
\usetikzlibrary{fit}            
\usepackage[utf8]{inputenc}
\usepackage{graphicx}   
\usepackage{caption}    
\usepackage{subcaption} 
 \usepackage{geometry} 
\usepackage{amsthm}

\usepackage[T1]{fontenc}

\usetikzlibrary{calc}
 \geometry{a4paper, margin=1in}
 
 \definecolor{myblue}{RGB}{0,0,255}
 \usepackage{pstricks}
\usepackage{pst-node}
\usepackage{pst-plot}
 
\numberwithin{equation}{section}
\numberwithin{figure}{section}
\numberwithin{table}{section}

\newtheorem{theorem}{Theorem}[section]
\newtheorem{lemma}[theorem]{Lemma}
\newtheorem{corollary}[theorem]{Corollary}

\theoremstyle{definition}
\newtheorem{definition}[theorem]{Definition}

\theoremstyle{remark}

\theoremstyle{remark}
\newtheorem*{conjecture}{\textbf{Conjecture}}

\tikzset{
    hollow/.style={circle, draw=black, fill=white, inner sep=1pt, thick, minimum size=3pt},
    solidred/.style={circle, draw=black, fill=red, inner sep=1pt, thick, minimum size=3pt}
}

\newcommand{\pathfour}{%
\tikz[baseline=-0.5ex, scale=0.55]{%
    \node[solidred] (A) at (0,0) {};
    \node[solidred]   (B) at (0.6,0) {};
    \node[solidred]   (C) at (1.2,0) {};
    \node[solidred]   (D) at (1.8,0) {};
    \draw[black, thick] (A) -- (B) -- (C) -- (D);
}%
}

\newcommand{\pathverticalfour}{%
\tikz[baseline=-0.5ex, scale=0.55]{%
    \node[solidred] (A) at (0,-1.5) {};
    \node[hollow]   (B) at (0,-1.0) {};
    \node[hollow]   (C) at (0,-0.5) {};
    \node[hollow]   (D) at (0,0) {};
    \draw[black, thick] (A) -- (B) -- (C) -- (D);
}%
}

\newcommand{\pathLshape}{%
\tikz[baseline=-0.5ex, scale=0.55]{%
    \node[solidred] (A) at (0,-1.0) {};
    \node[hollow]   (B) at (0,0) {};
    \node[hollow]   (C) at (0.8,0) {};
    \node[hollow]   (D) at (0.8,0.8) {};
    \draw[black, thick] (A) -- (B) -- (C) -- (D);
}%
}

\newcommand{\pathHorizontalRed}{%
\tikz[baseline=-0.5ex, scale=0.55]{%
    \node[solidred] (A) at (0,-0.8) {};
    \node[hollow]   (B) at (0,0) {};
    \node[hollow]   (C) at (0.8,0) {};
    \node[hollow]   (D) at (1.6,0) {};
    \draw[black, thick] (A) -- (B) -- (C) -- (D);
}%
}

\newcommand{\pathVshape}{%
\tikz[baseline=-0.5ex, scale=0.55]{%
    \node[solidred] (A) at (0,0) {};
    \node[hollow]   (B) at (-0.8,0.8) {};
    \node[hollow]   (C) at (0.8,0.8) {};
    \node[hollow]   (D) at (1.6,1.6) {};
    \draw[black, thick] (A) -- (B);
    \draw[black, thick] (A) -- (C) -- (D);
}%
}

\newcommand{\pathVshapeHorizontal}{%
\tikz[baseline=-0.5ex, scale=0.55]{%
    \node[solidred] (A) at (0,0) {};
    \node[hollow]   (B) at (-0.8,0.8) {};
    \node[hollow]   (C) at (0.8,0.8) {};
    \node[hollow]   (D) at (1.6,0.8) {};
    \draw[black, thick] (A) -- (B);
    \draw[black, thick] (A) -- (C) -- (D);
}%
}

       \newcommand{\pathCshape}{%
       \tikz[baseline=-0.5ex, scale=0.55]{%

           \node[hollow]   (TopLeft)     at (0, 1.0) {};
           \node[hollow]   (TopRight)    at (1.0, 1.0) {};
           \node[solidred] (BottomLeft)  at (0, 0) {};
           \node[solidred] (BottomRight) at (1.0, 0) {};

           \draw[black, thick] (TopLeft) -- (TopRight);
           \draw[black, thick] (TopRight) -- (BottomRight);
           \draw[black, thick] (BottomRight) -- (BottomLeft);
       }%
       }

     \newcommand{\pathCcshape}{%
     \tikz[baseline=-0.5ex, scale=0.55]{%

         \node[hollow]   (TopLeft)     at (0, 1.0) {};
         \node[hollow]   (TopRight)    at (1.0, 1.0) {};
         \node[solidred] (BottomLeft)  at (0, 0) {};
         \node[solidred] (BottomRight) at (1.0, 0) {};

         \draw[black, thick] (TopLeft) -- (BottomLeft);
         \draw[black, thick] (BottomLeft) -- (BottomRight);
         \draw[black, thick] (BottomRight) -- (TopRight);
     }%
     }

         \newcommand{\pathCccshape}{%
         \tikz[baseline=-0.5ex, scale=0.55]{%

             \node[hollow]   (TopLeft)     at (0, 1.0) {};
             \node[hollow]   (TopRight)    at (1.0, 1.0) {};
             \node[solidred] (BottomLeft)  at (0, 0) {};
             \node[solidred] (BottomRight) at (1.0, 0) {};

             \draw[black, thick] (TopLeft) -- (TopRight);
             \draw[black, thick] (TopLeft) -- (BottomLeft);
             \draw[black, thick] (TopRight) -- (BottomRight);
         }%
         }

   \newcommand{\pathLshapeNew}{%
   \tikz[baseline=-0.5ex, scale=0.55]{
     
       \node[hollow] (Top) at (0, 2.0) {};
       \node[hollow] (Mid) at (0, 1.0) {};
       \node[solidred] (Corner) at (0, 0) {}; 
       \node[solidred] (Right) at (1.0, 0) {};
       
       \draw[black, thick] (Top) -- (Mid);
       \draw[black, thick] (Mid) -- (Corner);
       \draw[black, thick] (Corner) -- (Right);
   }%
   }

     \newcommand{\pathZigzag}{%
     \tikz[baseline=-0.5ex, scale=0.40]{

         \node[hollow] (LeftTop) at (0, 1.0) {};
         \node[solidred] (LeftBottom) at (1.0, 0) {};
         \node[hollow] (RightTop) at (2.0, 1.0) {};
         \node[solidred] (RightBottom) at (3.0, 0) {};

         \draw[black, thick] (LeftTop) -- (LeftBottom);
         \draw[black, thick] (LeftBottom) -- (RightTop);
         \draw[black, thick] (RightTop) -- (RightBottom);
     }%
     }

        \newcommand{\pathGammaShape}{%
        \tikz[baseline=-0.5ex, scale=0.55]{

            \node[hollow] (Top) at (0, 1.0) {}; 
            \node[solidred] (Corner) at (0, 0) {}; 
            \node[solidred] (Mid) at (1.0, 0) {}; 
            \node[solidred] (Right) at (2.0, 0) {};

            \draw[black, thick] (Top) -- (Corner);
            \draw[black, thick] (Corner) -- (Mid);
            \draw[black, thick] (Mid) -- (Right);
        }%
        }

     \newcommand{\pathPeakShape}{%
     \tikz[baseline=-0.5ex, scale=0.40]{

         \node[solidred] (Left) at (0, 0) {};
         \node[hollow] (Peak) at (1.0, 1.0) {}; 
         \node[solidred] (Mid) at (2.0, 0) {}; 
         \node[solidred] (Right) at (3.0, 0) {};

         \draw[black, thick] (Left) -- (Peak);
         \draw[black, thick] (Peak) -- (Mid);
         \draw[black, thick] (Mid) -- (Right);
     }%
     }

\begin{document}
 
\title{Towards Esperet's Conjecture: Polynomial $\chi$-Bounds for Structured Graph Classes}

\author{
N. Rahimi\thanks{ORCID: 0009-0004-9571-669X}\ , D.A. Mojdeh\thanks{Corresponding author, ORCID: 0000-0001-9373-3390}\\
$^{1}$Department of Mathematics, Faculty of Mathematical Sciences,\\ University of Mazandaran, Babolsar, Iran\\
 $^{1}${\tt narjesrahimi1365@gmail.com}\\
 $^{2}${\tt damojdeh@umz.ac.ir}\\  
 $^{2}${\tt damojdeh@yahoo.com} 
}
\date{} 

\maketitle

\begin{abstract}
In this paper, we establish that the class of $\{P_6, (2,2)\text{-broom}\}$-free graphs contains a subclass $\mathcal{L}_i$, defined by certain cutset conditions, whose chromatic number admits a linear $\chi$-bound. 
Building on recent results showing that broom-free graphs excluding $K_d(t)$ as a subgraph admit a polynomial bound in~$t$ on their chromatic number (A broom is obtained from a path with one end v by adding leaves adjacent to v), we extend this result to the hereditary class $\mathcal{H}$ of $C_4$-free and \emph{$p$-flag}-free graphs (where a \emph{$p$-flag} is a triangle with an attached p-path).
We show that if $G \in \mathcal{H}$ is $B^{+}(p+2, t-1)$-free (for $p\!\ge2$ and $t\!\ge3$, that is, if it excludes a generalized broom with an additional leaf), and does not contain $K_d(t)$ as a subgraph, then $\chi(G)$ is polynomially bounded in~$t$. 
Furthermore, for the subclass of $\mathcal{H}$ excluding $K_3(t)$ as a subgraph, we prove that $\chi(G)$ is linearly $\chi$-bounded in~$\omega(G)$.
\end{abstract}

{\bf Keywords}: Graph vertex coloring, chromatic number, clique number, $\{P_6, (2,2)\text{-broom}\}$-free graphs, {$p$-flag}-free graphs.

{\bf 2020 Mathematics Subject Classification}:	05Cxx

\section{Introduction}

A \emph{$k$-coloring} of a graph is an assignment of colors from $\{1, \dots, k\}$ to its vertices such that adjacent vertices receive distinct colors. The \emph{chromatic number} $\chi(G)$ is the minimum $k$ for which such a coloring exists.
A significant challenge in graph theory lies in understanding graphs with arbitrarily large chromatic number. While a small chromatic number implies that the graph can be partitioned into a few simple, independent sets, the situation for graphs with high chromatic number is far less clear. This raises a natural question: what can be said about the local structure of such graphs? More precisely, does every graph with large chromatic number necessarily contain certain canonical substructures?

A class of graphs $\mathcal{G}$ is \emph{hereditary} if it is closed under taking induced subgraphs; that is, for every graph $G \in \mathcal{G}$, all induced subgraphs of $G$ also belong to $\mathcal{G}$. A central and extensively studied example of a hereditary graph class is the family of \emph{$H$-free} graphs, those that exclude a fixed graph $H$ as an induced subgraph. More generally, given a family of graphs $\mathcal{H}$, we say that a graph $G$ is \emph{$\mathcal{H}$-free} if $G$ is $H$-free for every $H \in \mathcal{H}$. We write $|G|$ for its number of vertices,
$\omega(G)$ for its clique number, and $\alpha(G)$ for its independence number. 

A hereditary class of graphs $\mathcal{G}$ is \emph{$\chi$-bounded} if there exists a function $f$ (called a \emph{$\chi$-binding function}) such that $\chi(G) \leq f(\omega(G))$ for every $G \in \mathcal{G}$. When $f$ can be taken to be a polynomial, $\mathcal{G}$ is said to be \emph{polynomially $\chi$-bounded}. Such graph classes are of significant interest, in part because they satisfy the Erdős--Hajnal conjecture~\cite{13}, which concerns the Ramsey number of $H$-free graphs.

\begin{conjecture}\label{con ER}
For every graph $H$, there exists a constant $c > 0$ such that
$\alpha(G) \cdot \omega(G) \geq |G|^c$
for every $H$-free graph $G$.
\end{conjecture}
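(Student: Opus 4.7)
The plan is to attempt the conjecture by proving the stronger statement that for every graph $H$ there is a constant $k = k(H)$ such that every $H$-free graph $G$ satisfies $\chi(G) \le \omega(G)^{k}$; i.e., the class of $H$-free graphs is polynomially $\chi$-bounded. Given such a bound, the Erd\H{o}s--Hajnal inequality follows by a short counting argument. In any $H$-free graph $G$ on $n := |G|$ vertices, an optimal coloring produces an independent set of size at least $n/\chi(G) \ge n/\omega(G)^{k}$. Setting $c := 1/(k+1)$ and splitting into the regimes $\omega(G) \ge n^{c}$ and $\omega(G) < n^{c}$, the clique itself handles the first case, while in the second one obtains $\alpha(G) \ge n/\omega(G)^{k} > n^{1-ck} \ge n^{c}$ since $c(k+1) \le 1$. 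The polynomial $\chi$-bound is therefore the real content, and it is the engine behind the results proved later in this paper for the subclasses $\mathcal{L}_i$ and $\mathcal{H}$.

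To attack the polynomial $\chi$-bound I would use structural induction on $H$. The base cases are complete graphs (trivial) and graphs on at most four vertices (classical). The first reduction is via modular substitution in the style of Alon--Pach--Solymosi: if $H$ admits a nontrivial module, one composes polynomial $\chi$-binding functions for the prime factors and reduces to prime $H$. The second reduction treats forests: the Gy\'arf\'as path argument shows that the chromatic number of any BFS level in an $H$-free graph is controlled by the chromatic number of $H'$-free graphs for proper subtrees $H'$ of $H$; iterating this and combining with the recent polynomial bounds of Scott--Seymour and Chudnovsky on broom- and caterpillar-type obstructions suggests that the correct inductive parameter is the pathwidth of $H$ together with its maximum degree.

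The main body of the argument addresses the remaining prime 2-connected graphs $H$ by a double induction on $|E(H)|$ and on $\omega(G)$. The scheme is: fix an edge $e = uv$ of $H$ and set $H' := H - e$. If $G$ is induced $H'$-free, invoke induction on $|E(H)|$. Otherwise enumerate the induced copies of $H'$ in $G$ and apply a weighted pigeonhole on the vertex pairs playing the roles of $u$ and $v$ to locate either an induced $H$ (a contradiction) or a dense substructure whose chromatic number can be controlled by induction on $\omega(G)$. The hard part --- and the reason the conjecture has resisted resolution for nearly fifty years --- is ruling out the \emph{consistent obstruction}, in which every induced copy of $H'$ in $G$ fails to extend to $H$ for the same structural reason, without either producing an induced $H$ or extracting a useful modular decomposition. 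This step seems to require a new Ramsey-type tool that is currently unavailable; for the classes $\mathcal{L}_i$ and $\mathcal{H}$ studied in this paper, the cutset and $p$-flag-freeness hypotheses bypass the obstruction directly, yielding the polynomial $\chi$-bound, and hence the conjecture, unconditionally within those subclasses.
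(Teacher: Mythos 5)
This statement is the Erd\H{o}s--Hajnal conjecture; the paper does not prove it (it is stated as a conjecture and remains open), so there is no proof in the paper to compare against. Your proposal is not a proof either, and it has a fatal flaw beyond the gap you yourself acknowledge at the end. The ``stronger statement'' you set out to prove --- that for every graph $H$ the class of $H$-free graphs satisfies $\chi(G) \le \omega(G)^{k}$ --- is false for every $H$ that is not a forest. By Erd\H{o}s's classical probabilistic construction there are graphs of arbitrarily large girth and arbitrarily large chromatic number; taking girth greater than $|V(H)|$ gives triangle-free (hence $\omega = 2$) $H$-free graphs with unbounded $\chi$, so the class of $H$-free graphs is not $\chi$-bounded at all, let alone polynomially. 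This is exactly why the Gy\'arf\'as--Sumner conjecture is restricted to trees, and why the paper's Conjecture on forests (the combination with Esperet's conjecture) is stated only for forests $H$. Your structural induction on prime $2$-connected graphs $H$ is therefore attacking a false target, and no refinement of the modular-decomposition or edge-deletion scheme can rescue it.

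The one correct ingredient in your write-up is the folklore reduction in the first paragraph: if a hereditary class is polynomially $\chi$-bounded, then it satisfies the Erd\H{o}s--Hajnal property, via $\alpha(G) \ge |G|/\chi(G)$ and the case split on whether $\omega(G) \ge |G|^{c}$ with $c = 1/(k+1)$. This is precisely the remark the paper makes when motivating polynomial $\chi$-boundedness, and it is the reason the paper's actual theorems (for the subclasses $\mathcal{L}_i$ and $\mathcal{H}$, where extra forbidden structures such as $C_4$, $p$-flags, and brooms restore $\chi$-boundedness) have Erd\H{o}s--Hajnal-type consequences \emph{within those subclasses}. But that reduction cannot be run for arbitrary $H$-free classes, and the conjecture itself must be approached through $\alpha(G)\cdot\omega(G)$ directly rather than through the chromatic number.
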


Louis Esperet~\cite{Esp17} made the following conjecture:

\begin{conjecture}\label{con ESP}
Let $\mathcal{G}$ be a $\chi$-bounded class. Then there is a polynomial function $f$ such that $\chi(G) \leq f(\omega(G))$ for every $G \in \mathcal{G}$.
\end{conjecture}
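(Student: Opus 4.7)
The plan is to attempt induction on the clique number $\omega(G)$, seeking absolute constants $C$ and $c$ (depending only on the class $\mathcal{G}$ and its given binding function $f$) such that $\chi(G) \le C \cdot \omega(G)^c$ for every $G \in \mathcal{G}$. The base case $\omega(G) \le 1$ is immediate. For the inductive step, fix $G \in \mathcal{G}$ and exploit that for every vertex $v \in V(G)$, the induced subgraph $G[N(v)]$ lies in $\mathcal{G}$ by heredity and satisfies $\omega(G[N(v)]) \le \omega(G) - 1$, so that by induction $\chi(G[N(v)]) \le C(\omega(G)-1)^c$. This is the only direct leverage the inductive hypothesis provides; everything else must come from the global hypothesis that $f$ is a binding function for $\mathcal{G}$.

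The heart of the argument is to lift this local neighbourhood bound to a global bound on $\chi(G)$. A natural first attempt is a levelling strategy: pick a vertex $v_0$, perform a BFS from $v_0$, and partition $V(G)$ into levels $L_0, L_1, \dots, L_k$. Odd-indexed and even-indexed levels can be coloured independently, and each $L_i$ induces a subgraph controlled by the neighbourhood bound above. The goal would then be to use the qualitative hypothesis $\chi(G) \le f(\omega(G))$ to cap the number of levels contributing essentially distinct palettes, amplifying the inductive bound by at most a constant factor rather than by $f(\omega(G))$ itself. A back-up plan would replace BFS with a Gy\'arf\'as-style partition into connected subgraphs of low $\chi$, then recursively colour the quotient.

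A complementary strategy is a minimality argument. Let $G \in \mathcal{G}$ be a counterexample minimising $|V(G)|$ subject to $\chi(G) > C\omega(G)^c$, so that every proper induced subgraph satisfies the polynomial bound. Using the assumed $f$, the plan is to locate an induced subgraph $H \subsetneq G$ on which the ratio $\chi(H)/\omega(H)^c$ is strictly larger than that of $G$---contradicting minimality---or on which the ratio $\chi(H)/f(\omega(H))$ exceeds~$1$, contradicting that $f$ is a binding function for $\mathcal{G}$. Natural candidates for $H$ include the common neighbourhood of a small clique, the closed neighbourhood of a vertex of high colour degree in an optimal colouring, or the subgraph induced by any two adjacent colour classes.

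The decisive obstacle, and the reason Esperet's conjecture has resisted resolution, is that $\chi$-boundedness is a purely existential hypothesis carrying no structural content: it guarantees that some binding function $f$ exists but supplies no lever to control the shape of $G$. Without an additional assumption---such as forbidding a concrete induced subgraph, as in the preceding sections of this paper---there is no apparent mechanism to convert the numerical inequality $\chi(G) \le f(\omega(G))$ into the combinatorial information needed to drive either the level-based or the density-increment arguments above. The fragile step is therefore the transition from ``some binding function exists'' to ``a specific polynomial can be extracted''; any successful proof would almost certainly require a new structural theorem describing the internal organisation of arbitrary $\chi$-bounded classes, rather than manipulating $f$ as a black box.
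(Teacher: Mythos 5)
The statement you are trying to prove is labelled as a conjecture in the paper, and the paper neither proves it nor claims it: immediately after stating it, the authors record that Esperet's conjecture was \emph{disproved} by Bria\'nski, Davies and Walczak (reference \cite{BDW22} in the paper), who constructed a hereditary class that is $\chi$-bounded but admits no polynomial $\chi$-binding function. So there is no ``paper's own proof'' to compare against, and no correct proof can exist; any attempt is necessarily doomed, not merely difficult.

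On the substance of your attempt: your closing paragraph correctly diagnoses the fatal obstruction, but that diagnosis is an admission that none of your three strategies closes the argument, so what you have written is a survey of failed plans rather than a proof. Concretely, the inductive step is already broken at its first move: knowing $\chi(G[N(v)]) \le C(\omega(G)-1)^c$ for every vertex $v$ gives essentially no control over $\chi(G)$ --- in a triangle-free graph every neighbourhood is edgeless, so the hypothesis is satisfied with $C = 1$, $c = 0$, yet triangle-free graphs have unbounded chromatic number. The levelling and minimal-counterexample strategies founder on the same point you identify: the existence of \emph{some} binding function $f$ is a purely numerical hypothesis with no structural content, and the Bria\'nski--Davies--Walczak construction shows this is not just a gap in technique but a genuine counterexample. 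If you want a true statement in this vicinity, you should restrict to structured classes (forbidding a specific induced subgraph, or imposing the cutset conditions defining $\mathcal{L}_i$), which is precisely what the remainder of the paper does.
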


Esperet's conjecture was shown false by Brianski, Davies and Walczak~\cite{BDW22}. However, this raises the further question: which $\chi$-bounded classes are polynomially $\chi$-bounded?

Gyárfás~\cite{Gyarfas1975} and Sumner~\cite{Sumner1981} independently conjectured that for every
tree~$T$, the class of $T$-free graphs is $\chi$-bounded.
This conjecture has been confirmed for some special trees
(see, for example, \cite{Chudnovsky2019,Gyarfas1975,Gyarfas1980,Kierstead1994,Kierstead2004,Scott1997,Scott2020}),
but remains open in general.

By combining the \emph{Gyárfás--Sumner Conjecture} with \emph{Esperet's Conjecture}, one obtains the following conjecture:

\begin{conjecture}\label{con MIX}
For every forest $H$, there exists $c > 0$ such that $\chi(G) \leq \omega(G)^c$ for every $H$-free graph $G$.
\end{conjecture}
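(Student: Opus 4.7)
\medskip
\noindent\textbf{Proof proposal.} This statement is a simultaneous strengthening of the Gyárfás--Sumner conjecture and of the polynomial case of Esperet's problem, both of which remain open in general, so the plan is necessarily a strategy for organizing an inductive attack on special cases rather than a full resolution. I would argue by structural induction on the forest $H$, taking as base cases the families for which polynomial $\chi$-bounds are already known (complete multipartite graphs, short paths, brooms), and reducing a general tree $T$ to a smaller one by extracting a canonical substructure inside any $T$-free graph whose chromatic number substantially exceeds every polynomial in $\omega(G)$.

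\medskip
The first step is to set up Gyárfás's levelling machinery: fix a BFS partition $L_0, L_1, \dots$ from an arbitrary vertex and pass to a single level $L_i$ whose induced subgraph carries chromatic number at least $\chi(G)/2$. In an $H$-free graph the interaction between $L_i$ and $L_{i-1}$ strongly restricts which induced paths and small subtrees of $H$ can appear, and one can iterate: either build the forbidden $H$ level by level, or locate a level in which a smaller subtree $H'\subseteq H$ sits in so rigid a position that the inductive hypothesis forces a polynomial clique lower bound. In parallel, Conjecture~\ref{con ER} suggests the right ``Ramsey side'': in a $T$-free graph the product $\alpha(G)\cdot\omega(G)$ should, when $\chi$ is large, be witnessed by a clique rather than a stable set, which is precisely the conversion the induction needs.

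\medskip
The hard part will be the polynomial upgrade. Even assuming the full Gyárfás--Sumner conjecture as a black box, converting an exponential $\chi$-bound into a polynomial one requires avoiding a ``nested doubling'' in the inductive step, and this is exactly what the Briański--Davies--Walczak counterexample to Esperet's conjecture exploits in the generic setting. Hence the forest-freeness must enter essentially, most plausibly by sharpening the inductive hypothesis to a bound of the form $\chi(G)\le c_H\cdot\omega(G)^{d(H)}$ with $d(H)$ a radius-like parameter of $H$, and by controlling the depth of the recursion through the height of the tree. Given these obstacles, I expect that any concrete output of the plan will resemble the paper's own results: polynomial $\chi$-bounds for carefully delineated subclasses (for instance the $C_4$-free, $p$-flag-free class of the abstract, or the $\{P_6,(2,2)\text{-broom}\}$-free class of the title) rather than a single bound valid for every forest.
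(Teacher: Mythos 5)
There is nothing in the paper to compare your attempt against: the statement you were asked to prove is stated in the paper only as a \emph{conjecture} (the combination of the Gy\'arf\'as--Sumner conjecture with the polynomial form of Esperet's question), and the paper offers no proof of it, only the remark that any forest satisfying it would also satisfy the Erd\H{o}s--Hajnal conjecture. You correctly recognize this, and your write-up is candid that it is a research plan rather than a proof. As such, it should not be judged as a failed proof --- but it is also not a proof, and it could not be completed by the methods you sketch: the Gy\'arf\'as levelling argument you describe is the standard route to the known special cases of Gy\'arf\'as--Sumner, but as you yourself note it produces exponential (or worse) bounds through nested recursion, and no currently known technique converts these into polynomial bounds for a general forest. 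The only concrete inaccuracy worth flagging is the suggestion that one might ``assume the full Gy\'arf\'as--Sumner conjecture as a black box'' and then upgrade to polynomial bounds; the Bria\'nski--Davies--Walczak construction shows that $\chi$-boundedness alone never implies polynomial $\chi$-boundedness, so the forest structure would have to enter the argument from the very beginning, not as a post hoc refinement. Your closing prediction --- that realistic progress consists of polynomial bounds for restricted subclasses such as those treated in this paper --- accurately reflects what the paper actually does.
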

  
Every forest H satisfying this conjecture also satisfies the Erdős-Hajnal conjecture.

All graphs considered in this paper are finite and simple.  
We use $P_k$ and $C_k$ to denote an induced path and an induced cycle on $k$ vertices, respectively. In this paper, a $p$-path refers to a path of length p.

Let $G$ be a graph and $X \subseteq V(G)$.  
We use $G[X]$ to denote the subgraph of $G$ induced by $X$.
For $X \subseteq V(G)$, let  
$N_G(X) = \{\, u \in V(G) \setminus X : u \text{ has a neighbor in } X \,\}$ and

$N_G[X] = N_G(X) \cup X$.

Let $\Delta(G)$ and $\delta(G)$ be the maximum and minimum degree of $G$, respectively. For $u,v \in V(G)$, we simply write $u \sim v$ if $uv \in E(G)$,  
and write $u \nsim v$ if $uv \notin E(G)$.

For a positive integer~$i$, let  
$
N^{i}(X) := \{\, u \in V(G) \setminus X : \min\{ d(u,v) : v \in X \} = i \,\},
$
where $d(u,v)$ is the distance between $u$ and $v$ in~$G$.  
Then $N^{1}(X) = N(X)$ is the neighborhood of~$X$.  
Moreover, let
$
N^{\ge i}(X) := \bigcup_{j=i}^{\infty} N^{j}(X).
$
Let $H$ be any subgraph of~$G$.  
For simplicity, we sometimes write $N^{i}(H)$ for $N^{i}(V(H))$ and $N^{\ge i}(H)$ for $N^{\ge i}(V(H))$.  
When $G$ is clear from the context, we omit the subscript.
 Given a set $\mathcal{H}$ of graphs, let $f^*_{\mathcal{H}}(\omega)$ be the optimal $\chi$-binding function for $\mathcal{H}$-free graphs, i.e., $f^*_{\mathcal{H}}(\omega) = \max\{\chi(G) : \omega(G) = \omega,\ G \text{ is } \mathcal{H}\text{-free}\}$.
 
The Ramsey number $R(p,q)$ is the least number $N$ such that every graph on $N$ vertices contains an independent set on
$p$ vertices or a clique on $q$ vertices. Erdős and Szekeres~\cite{Erdos1935} proved in 1935 that for any positive integers $s$ and $t$,
$R(s, t) \le \binom{s + t - 2}{t - 1}$. Denote the Ramsey number in the family of $F$-free graphs by $R_F(p,q)$ for convenience, we will write $R_{C_4}(p,q)$ as $\varphi(p,q)$.\\
In 2024, Zhou et al.~\cite{Zhou2024} proved the following fundamental result concerning the Ramsey number
$\varphi(n, \omega)$ in the family of $C_4$-free graphs.

\begin{theorem}\label{TH PHI}\emph{(\cite{Zhou2024}, Theorem~1.3)}
$\varphi(n, \omega)$ is linear in~$\omega$.
\end{theorem}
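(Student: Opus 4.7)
My plan is to bound $|V(G)|$ directly for any $C_4$-free graph $G$ with $\alpha(G) \leq n-1$, by partitioning $V(G)$ according to how each vertex ``sees'' a fixed maximum independent set. First I would fix a maximum independent set $I \subseteq V(G)$, so that $|I| = \alpha(G) \leq n-1$, and for each $v \in V(G) \setminus I$ record its trace $N(v) \cap I$. Maximality of $I$ forces $N(v) \cap I \neq \emptyset$ for every such $v$, so the cells $V_S := \{v \in V(G) \setminus I : N(v) \cap I = S\}$ indexed by nonempty subsets $S \subseteq I$ partition $V(G) \setminus I$, giving at most $2^{n-1} - 1$ cells.

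The heart of the argument is showing that each cell $V_S$ is a clique, hence $|V_S| \leq \omega(G)$. Two structurally different cases arise. When $|S| \geq 2$, I would pick $a,b \in S$; a non-adjacent pair $u,v \in V_S$ would force $\{a,u,b,v\}$ to induce a $C_4$, because $a \not\sim b$ (both lie in the independent set $I$), $u \not\sim v$ by assumption, and each of $u,v$ is adjacent to each of $a,b$, contradicting $C_4$-freeness. When $S = \{a\}$ is a singleton, $C_4$-freeness alone is insufficient; here maximality of $I$ becomes essential, since a non-adjacent pair $u,v \in V_{\{a\}}$ would yield the strictly larger independent set $(I \setminus \{a\}) \cup \{u,v\}$, contradicting $|I| = \alpha(G)$.

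Summing over the cells gives $|V(G)| \leq |I| + (2^{|I|} - 1)\omega(G) \leq (n-1) + (2^{n-1} - 1)\omega(G)$, which is linear in $\omega(G)$ for each fixed $n$, yielding $\varphi(n,\omega) = O_n(\omega)$ as claimed. The main obstacle will be the singleton case: $C_4$-freeness does not on its own constrain $V_{\{a\}}$, so one must carefully exploit the maximality of $I$ there rather than use a generic independent set. Should one want a polynomial-in-$n$ constant rather than the exponential $2^{n-1}$, an alternative route is to pass to the complement---noting $G$ is induced-$C_4$-free iff $\bar G$ is induced-$2K_2$-free, with $\omega(\bar G) = \alpha(G) \leq n-1$---and invoke Wagon's classical bound $\chi(H) \leq \binom{\omega(H)+1}{2}$ for $2K_2$-free $H$ applied to $\bar G$, which combined with $|V(\bar G)| \leq \chi(\bar G)\cdot \alpha(\bar G)$ yields $|V(G)| \leq \binom{n}{2}\omega(G)$.
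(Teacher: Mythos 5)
Your argument is correct. Note first that the paper does not actually prove this statement: it is imported verbatim from Zhou, Li, Song and Wu, and the paper only records the explicit bounds extracted from their proof, namely $\varphi(3,\omega)\le\lfloor 5(\omega-1)/2\rfloor+1$ and $\varphi(n,\omega)\le\binom{n}{2}(\omega-2)+n$, combined into $\varphi(n,\omega)\le\binom{n}{2}(\omega-1)+n$. Your trace-partition proof is a valid, self-contained alternative: both the $|S|\ge 2$ case (the induced $C_4$ on $\{a,u,b,v\}$, using $a\not\sim b$ inside $I$ and $u\not\sim v$) and the singleton case (swapping $a$ for $\{u,v\}$ to enlarge $I$) are sound, and every cell being a clique gives $|V(G)|\le (n-1)+(2^{n-1}-1)\,\omega(G)$, which is linear in $\omega$ for fixed $n$ and hence proves the theorem as stated. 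The trade-off is quantitative: your primary route pays an exponential constant $2^{n-1}$ in $n$, whereas the bound the paper actually uses downstream (in Theorem~\ref{TH 1 B+} and Theorem~\ref{TH 3 K3T}) is the polynomial-in-$n$ bound $\binom{n}{2}(\omega-1)+n$, which matters there because $n$ is taken to be the parameter $t$. Your closing remark --- passing to the complement, using that $\bar G$ is $2K_2$-free with $\omega(\bar G)=\alpha(G)\le n-1$, and applying Wagon's bound $\chi(H)\le\binom{\omega(H)+1}{2}$ together with $|V(\bar G)|\le\chi(\bar G)\,\alpha(\bar G)$ --- recovers $|V(G)|\le\binom{n}{2}\omega(G)$ and thus essentially the same constant as the cited source, so that version is the one you would want if the quantitative dependence on $n$ is needed.
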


In the proof of Theorem~\ref{TH PHI}, the authors obtained the following explicit bounds which will be used in our work:
$\varphi(3,\omega) \le \left\lfloor \tfrac{5(\omega-1)}{2} \right\rfloor + 1$ (Claim~2.1), and 
$\varphi(n,\omega) \le \binom{n}{2}(\omega - 2) + n$ for $n>3$ and $\omega>1$ (Claim~2.3).
It is evident that for all $n \ge 3$ and $\omega \ge 1$, we have
\[
\varphi(n,\omega) \le \binom{n}{2}(\omega - 1) + n.
\]

The class of $P_4$-free graphs is particularly well-behaved: 
as shown in~\cite{24}, every $P_4$-free graph $G$ with at least two vertices is either disconnected or its complement is disconnected, 
and consequently $\chi(G) = \omega(G)$. Until now, whether there exists a polynomial bound for the chromatic number of
$P_t$-free graphs ($t \ge 5$) is still an open question. The best known result is quasi-polynomial for $P_5$-free graphs:
$f(\omega) \le \omega^{\log_2 \omega}$, proposed by Scott et~al.~\cite{p5Scott2023}. Cameron et~al.~\cite{Cameron2018} proved that for any $(P_6, \text{diamond})$-free graph $G$, 
the chromatic number satisfies $\chi(G) \le \omega(G) + 3$.

However, forbidding additional graphs may lead to a polynomial bound. The family of $C_4$-free graphs is known not to be $\chi$-bounded. Brause et~al.~\cite{Brause2022} proved that
$
f^{*}_{(P_5,C_4)}(\omega) \le \left\lceil \frac{5\omega - 1}{4} \right\rceil.
$

\begin{definition}\label{DEF BTK}
For positive integers $t,k$, a $(t,k)$-broom is the graph obtained from $K_{1,t+1}$ by subdividing an edge $k$ times. In some papers, a $(t,1)$-broom is called a \emph{$t$-broom}, and a $(t,2)$-broom is called a \emph{$t$-broom$^+$} (Figure 2.1(d)).  
\end{definition}
Chudnovsky et~al.~\cite{Chudnovsky2021} proved that each $(C_4, 2\text{-broom})$-free graph $G$ satisfies
$
\chi(G) \le \frac{3}{2} \,\omega(G)
$
and Several results in this direction are presented below:
\begin{theorem}\label{TH OM T+1}\emph{(\cite{Liu2023} 1.1)}
Let $t$ be a positive integer. For $t$-broom-free graphs $G$, we have $\chi(G) = o\big(\omega(G)^{t+1}\big).$
\end{theorem}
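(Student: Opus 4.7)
The plan is to argue by induction on $t$, combining a Ramsey-type analysis of a first neighborhood with the structural constraint imposed by forbidding a $t$-broom. The base case $t=1$ is immediate since a $1$-broom is precisely $P_{4}$, and every $P_{4}$-free graph is a cograph satisfying $\chi(G)=\omega(G)=o(\omega(G)^{2})$.

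For the inductive step, assume the bound for $(t-1)$-broom-free graphs, and let $G$ be $t$-broom-free with $\omega(G)=\omega$. By passing to a vertex-critical subgraph we may assume $\delta(G)\ge \chi(G)-1$, so it suffices to bound $\delta(G)$. Fix a vertex $v$ of minimum degree. If $|N(v)|\ge R(t+1,\omega+1)=O(\omega^{t})$ by the Erd\H{o}s--Szekeres estimate, then $N(v)$ contains an independent set $I=\{u_{0},u_{1},\ldots,u_{t}\}$ of size $t+1$. Together with $v$ this produces an induced $K_{1,t+1}$, so the $t$-broom-free hypothesis forbids each $u_{i}$ from having a neighbor outside $N[v]\cup I$ that simultaneously misses $v$ and all of $I\setminus\{u_{i}\}$. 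Next, I would leverage this local restriction to descend to a smaller instance: pass to an auxiliary subgraph $G'$ sitting inside a carefully chosen common neighborhood (which forces $\omega(G')\le \omega-1$), or alternatively arrange that $G'$ is $(t-1)$-broom-free, since any $(t-1)$-broom inside $G'$ could be promoted to a full $t$-broom of $G$ by attaching one of the $u_{i}$ as an extra leaf, contradicting the hypothesis. In either variant, the inductive hypothesis (or the trivial smaller-clique base bound) yields $\chi(G')=o(\omega^{t})$, and folding this together with the Ramsey threshold on $\delta(G)$ via a standard critical-graph argument produces $\chi(G)=o(\omega^{t+1})$.

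The hard part will be this reduction step. The $t$-broom-free hypothesis tells us what \emph{cannot} appear around $v$, but turning this into a clean statement that a canonical auxiliary graph is $(t-1)$-broom-free (or has strictly smaller clique number) requires choosing the independent set $I$ and the auxiliary subgraph with care, so that the forbidden-extension argument succeeds uniformly over every candidate $u_{i}$. A secondary subtlety lies in obtaining the little-$o$ improvement over $\omega^{t+1}$ rather than a bare $O(\omega^{t+1})$: this will likely require either exploiting the known $o$-type refinements of the off-diagonal Ramsey number $R(t+1,\omega)$, or a delicate averaging over choices of $I$, in place of the blunt Erd\H{o}s--Szekeres estimate used above.
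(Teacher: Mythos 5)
This theorem is not proved in the paper at all: it is quoted verbatim from Liu, Schroeder, Wang and Yu \cite{Liu2023} (their Theorem~1.1) and used as a black box, so there is no in-paper argument to compare yours against. Judged on its own terms, your proposal is a sketch with a genuine gap at its center, which you yourself flag. The base case is fine (a $1$-broom is $P_4$, and $P_4$-free graphs are perfect cographs), and the Ramsey dichotomy on $|N(v)|$ is a reasonable opening move. But the reduction step is not an argument: producing one induced $K_{1,t+1}$ at a minimum-degree vertex $v$ only constrains vertices at distance two from $v$ through the leaves $u_i$; it says nothing about the size or chromatic number of $N(v)$ itself, which is what the critical-subgraph reduction requires you to bound. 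Neither proposed auxiliary graph $G'$ is actually defined. The ``promotion'' claim -- that a $(t-1)$-broom in $G'$ extends to a $t$-broom of $G$ by attaching some $u_i$ -- needs $u_i$ to be adjacent to the center of that $(t-1)$-broom and to none of its other vertices, and nothing in the construction arranges this. The ``common neighborhood'' variant switches the induction from $t$ to $\omega$ midstream without specifying the recursion, and even if it closed it would deliver $O(\omega^{t+1})$, not $o(\omega^{t+1})$: the Erd\H{o}s--Szekeres bound $R(t+1,\omega)=O(\omega^{t})$ cannot by itself produce the little-$o$, which in \cite{Liu2023} comes from Shearer/Kim-type improvements on off-diagonal Ramsey numbers fed into a genuinely different decomposition (a Gy\'arf\'as-style leveling plus an analysis of neighborhoods of induced stars), not from a minimum-degree argument on a critical subgraph.

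In short: the skeleton is plausible as a heuristic, but the two load-bearing steps -- a well-defined descent to a smaller instance, and the source of the $o(\cdot)$ -- are both missing, so this does not constitute a proof of the cited theorem.
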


\begin{theorem}\label{TH T,T}\emph{(\cite{Liu2023} 1.3)}
Let $t \ge 3$ be an integer.  
For all $\{t\text{-broom},\, K_{t,t}\}$-free graphs $G$, we have
$
\chi(G) = o\big(\omega(G)^{t}\big).
$
\end{theorem}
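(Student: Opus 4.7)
The plan is to refine the argument establishing Theorem~\ref{TH OM T+1} (the $o(\omega^{t+1})$ bound for $t$-broom-free graphs) by exploiting the additional $K_{t,t}$-freeness to strip off one factor of $\omega$. I would induct on $\omega := \omega(G)$, aiming to exhibit a $\chi$-binding function $f$ with $f(\omega) = o(\omega^{t})$. Fix a connected $G$ in the class and a vertex $v_0$, and decompose $V(G)$ into BFS layers $L_0 = \{v_0\}, L_1, L_2, \ldots$ By the classical parity trick $\chi(G) \le 2\max_i \chi(G[L_i])$, so it suffices to bound the chromatic number of a single layer.

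Within a fixed layer $L_i$, I would run the standard Gyárfás reduction: pass to a connected component $C$ of $G[L_i]$ and locate a vertex $u \in L_{i-1}$ whose neighborhood $B := N(u)\cap C$ carries at least half of $\chi(G[C])$. The core task becomes bounding $\chi(G[B])$. Every vertex of $B$ shares $u$ as a common external neighbor in $L_{i-1}$, so the $t$-broom-freeness of $G$, with $u$ playing the role of the broom's high-degree vertex, forbids the simultaneous existence of (i)~an independent set of size $t$ inside $B$ and (ii)~a ``handle'' $s \in B$ with a neighbor $w \in L_i \setminus N[u]$ non-adjacent to that independent set. This is the structural restriction that drove the $o(\omega^{t+1})$ bound of Theorem~\ref{TH OM T+1}.

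The new ingredient is the $K_{t,t}$-freeness. Consider the bipartite graph $H$ between $B$ and its external neighborhood inside $L_i \setminus N[u]$; since $G$ is $K_{t,t}$-free, so is $H$, and hence Kővári--Sós--Turán bounds its edges by $O\bigl(|B|^{\,2-1/t} + |V(H)|\bigr)$. Plugging this density estimate into the iterative step of the broom analysis replaces one of the $\omega$-factors from Theorem~\ref{TH OM T+1} by a sublinear-in-$\omega$ factor: after the inductive hypothesis on $\omega$ is applied to the subgraph on $B \setminus N(x)$ for a well-chosen $x$, one obtains $\chi(G[B]) = \omega^{t-1}\cdot o(\omega)$, and hence $\chi(G) = o(\omega^{t})$ via the two-level reduction above.

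The delicate step I expect to be the main obstacle is converting the Kővári--Sós--Turán edge bound into an actual chromatic saving. Edge-count reductions do not transfer directly to $\chi$-bounds, so one must combine the density estimate with a structural extraction---for instance, locating a high-degree vertex of $H$ whose neighborhood supports a subgraph of comparable chromatic number, and then invoking the broom-free restriction to iterate inside it. Tuning this extraction is what determines whether the saving is a full factor of $\omega$ or only $\omega^{1/t}$; it is also, I suspect, where the ``$o$'' (rather than a clean ``$O$'') in the conclusion necessarily enters, since the saving is likely to carry a subpolynomial slack that a strict $O(\omega^{t})$ bound could not absorb through the induction.
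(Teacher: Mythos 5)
First, a point of comparison: the paper offers no proof of this statement. Theorem~\ref{TH T,T} is quoted verbatim from \cite{Liu2023} (their Theorem~1.3) and used as a black box, so your proposal has to stand on its own. It does not, for two concrete reasons. The first is the reduction step: the claim that inside a layer $L_i$ one can find $u\in L_{i-1}$ with $\chi\bigl(G[N(u)\cap C]\bigr)\ge \tfrac12\chi(G[C])$ is not a valid lemma. Take $C$ to be any subgraph of large chromatic number sitting in $L_i$ and let $L_{i-1}$ be matched to $C$, each vertex of $L_{i-1}$ seeing exactly one vertex of $C$; then $\chi(N(u)\cap C)=1$ for every $u$ while $\chi(C)$ is unbounded. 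The genuine Gy\'arf\'as-type reductions used in this area are statements about neighborhoods of induced paths grown from $v_0$, or about $\rho$-controlled classes, and they do not let you pin the chromatic number of a whole component on a single vertex of the previous layer. (A smaller issue: your broom extraction also needs the handle $s$ to be non-adjacent to the $t$ independent leaves, not only $w$; otherwise the configuration is not an induced $t$-broom.)

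The second and more fundamental gap is the one you flag yourself: the entire saving of a factor of $\omega$ is supposed to come from feeding the K\H{o}v\'ari--S\'os--Tur\'an edge bound for $K_{t,t}$-free bipartite graphs into the induction, but you never carry this out, and you explicitly concede that the extraction might only save a factor of $\omega^{1/t}$. If that is all it yields, the argument gives at best $\chi(G)=o\bigl(\omega^{t+1-1/t}\bigr)$, which does not prove the stated $o\bigl(\omega^{t}\bigr)$ and barely improves on Theorem~\ref{TH OM T+1}. An edge-density estimate does not convert into a chromatic bound without a further structural idea, and that idea is exactly what is missing; the original argument in \cite{Liu2023} exploits $K_{t,t}$-freeness structurally inside an induction (and obtains the little-$o$ from sharpened Ramsey-type estimates), not from a density count. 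As written, the proposal is a programme whose two load-bearing steps are, respectively, false and unexecuted.
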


\begin{theorem}\emph{(~\cite{Zhou2024} 1.4)}\label{TH CT,T}
Let  $t \ge 2$ be an integer and let $\mathcal{G}$ be the family of $(C_4,\, t\text{-broom}^+)$-free graphs.  
Then $\mathcal{G}$ is quadratically $\chi$-bounded.
\end{theorem}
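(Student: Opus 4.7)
The plan is to establish $\chi(G)\le f(t)\,\omega(G)^{2}$ by induction on $\omega=\omega(G)$, driven by a neighbourhood decomposition around a vertex of maximum degree and the $C_4$-free Ramsey estimate from Theorem~\ref{TH PHI}. The target inductive inequality is of the shape $f(t,\omega)\le f(t,\omega-1)+O_{t}(\omega)$, which telescopes to a quadratic function of $\omega$.

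First I would fix a vertex $v$ of maximum degree and set $A=N(v)$ and $B=V(G)\setminus N[v]$. The induced subgraph $G[A]$ is $(C_4, t\text{-broom}^{+})$-free with $\omega(G[A])\le\omega-1$, so by induction $\chi(G[A])\le f(t,\omega-1)$. Using $\chi(G)\le\chi(G[A])+\chi(G[B])+1$, the problem reduces to showing $\chi(G[B])=O_{t}(\omega)$. A structural fact used repeatedly is that in any $C_4$-free graph, whenever two vertices are non-adjacent their common neighbourhood is a clique; applied to any $u\in N^{2}(v)$, this forces $N(u)\cap A$ to be a clique of size at most $\omega-1$.

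The heart of the argument is bounding $\Delta(G)$ when $N^{\ge 3}(v)\neq\emptyset$. Any induced path $v$-$a$-$b$-$c$ with $c\in N^{3}(v)$ is automatically induced (since $c$ has no neighbour in $A$), and combined with an independent set $I\subseteq A\setminus(N(a)\cup N(b))$ of size $t$ it yields a $t$-broom$^{+}$ centred at $v$. By the clique bound $|N(b)\cap A|\le\omega-1$, while $|N(a)\cap A|$ must be controlled by a more delicate argument inside $G[A]$ (itself $(C_4, t\text{-broom}^{+})$-free with clique number at most $\omega-1$): one chooses the path endpoint $a\in N(b)\cap A$ so that its degree in $G[A]$ is $O_{t}(\omega)$, using Theorem~\ref{TH PHI} applied within $G[A]$ to avoid an internal $t$-broom$^{+}$. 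Then Theorem~\ref{TH PHI} applied to the $C_4$-free graph on $A\setminus(N(a)\cup N(b))$ with Ramsey parameter $n=t$ forces $|A|=O_{t}(\omega)$, so $\chi(G)\le\Delta(G)+1=O_{t}(\omega)$ in this case. In the remaining case $N^{\ge 3}(v)=\emptyset$, the set $B=N^{2}(v)$ is covered by the family of cliques $\{N(u)\cap A:u\in N^{2}(v)\}$, and a standard colouring of this cover, made compatible with a colouring of $G[A]$, yields the same $O_{t}(\omega)$ bound on $\chi(G[B])$.

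The main obstacle I foresee is exactly the bound on $|N(a)\cap A|$: since $a\sim v$, $C_4$-freeness does not directly constrain the common neighbourhood of the adjacent pair $\{a,v\}$, so the selection of $a$ must be done with care and likely requires an auxiliary lemma inside $G[A]$ combining $C_4$-freeness with the forbidden broom. The base case $\omega(G)\le 2$ is comparatively easy: $G$ is then triangle-free and $C_4$-free, so $N(v)$ is always independent, and any vertex of sufficiently large degree together with a length-$3$ path from one of its neighbours immediately produces a $t$-broom$^{+}$, forcing $\Delta(G)$, and hence $\chi(G)$, to be bounded by a constant depending only on $t$.
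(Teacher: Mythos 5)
The paper does not actually prove this statement: Theorem~\ref{TH CT,T} is imported from Zhou et al.~\cite{Zhou2024} (their Theorem~1.4), so there is no in-paper proof to compare against and your proposal must stand on its own. Its general flavour is reasonable (level the graph from a vertex $v$, build a $t$-broom$^+$ from an induced path $v$-$a$-$b$-$c$ reaching $N^{3}(v)$ together with a Ramsey-extracted independent set in $N(v)$, with Theorem~\ref{TH PHI} supplying the linear-in-$\omega$ Ramsey number), but the two steps you defer are exactly the hard parts of the theorem, and as written neither goes through. First, the bound $|N(a)\cap A|=O_{t}(\omega)$: since $a\sim v$, $C_4$-freeness imposes no constraint on $N(a)\cap N(v)$ --- the ``book'' consisting of an edge $av$ plus a large independent set of common neighbours is $C_4$-free with $\omega=3$ and $|N(a)\cap N(v)|$ unbounded. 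Your proposed remedy (choose $a\in N(b)\cap A$ of small degree in $G[A]$, ``using Theorem~\ref{TH PHI} to avoid an internal $t$-broom$^+$'') is not an argument: $N(b)\cap A$ is a clique of at most $\omega-1$ candidates, nothing forces any of them to have small degree in $G[A]$, and a large-degree vertex of $G[A]$ does not by itself yield a $t$-broom$^+$, since one also needs an induced $P_4$ emanating from it with controlled attachments, and $G[A]$ may well be $P_4$-free. Controlling this quantity requires a genuinely new use of broom-freeness (e.g.\ re-centering a broom at $a$, or choosing the path differently).

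Second, the radius-two case $N^{\ge 3}(v)=\emptyset$ is not handled: the cliques $N(u)\cap A$ for $u\in N^{2}(v)$ are subsets of $A$, not of $N^{2}(v)$, so they do not cover $B$ and say nothing about the internal structure of $G[N^{2}(v)]$, whose clique number can still equal $\omega(G)$. Without a real $O_{t}(\omega)$ bound on $\chi(G[N^{2}(v)])$, your recursion only gives $f(\omega)\le 2f(\omega-1)+O_{t}(\omega)$, which telescopes to an exponential, not a quadratic, bound. (Relatedly, the base-case claim that $\Delta(G)$ is bounded when $\omega\le 2$ is false --- stars are $(C_4,\,t\text{-broom}^{+})$-free --- even though $\chi$ is of course small there.) These two gaps are precisely where the substance of the proof in~\cite{Zhou2024} lies, so the proposal cannot be accepted as a proof of the theorem.
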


While Esperet’s Conjecture remains open, for a graph $G$ and an integer $d \ge 1$, it may be preferable to replace a clique with a complete $d$-partite graph, each part of cardinality $t$, denoted by $K_d(t)$. This question has been posed in \cite{Doe2024}.

An analogue of Esperet’s (false) conjecture for $K_d(t)$:\\
Let $\mathcal{C}$ be a hereditary class of graphs, and let $d \ge 1$. Suppose that $\mathcal{C}$ does not contain $K_d(t)$ as a subgraph (not necessarily induced). Assume there exists a function $f$ such that
\[
\chi(G) \le f(t)
\]
for each $G \in \mathcal{C}$.  
Can we always choose $f$ to be a polynomial?

  If \( X \subseteq V(G) \), we define \( \chi(X) = \chi(G[X]) \). Let \( X, Y \) be disjoint subsets of \( V(G) \), where \( |X| = t \) and every vertex in \( Y \) is adjacent to every vertex in \( X \). We call \( (X, Y) \) a \textbf{\( t \)-biclique}, and its value is \( \chi(Y) \).

For integers \( p, t \geq 1 \), a \textbf{\( (p, t) \)-balloon} \( (P, Y) \)(or simply a $(p,t)$-balloon $Y$) in \( G \) consists of an induced path \( P \) in \( G \) with vertices \( v_1\text{-}v_2\text{-}\cdots\text{-}v_p \) (in order), and a subset \( Y \subseteq V(G) \), satisfying the following properties:
\begin{itemize}[leftmargin=*, label=$\bullet$]
    \item \( v_1, \ldots, v_{p-1} \notin Y \), \( v_p \in Y \). None of \( v_1, \ldots, v_{p-2} \) have a neighbor in \( Y \), and if \( p \geq 2 \), then \( v_p \) is the unique neighbor of \( v_{p-1} \) in \( Y \).
    \item \( G[Y] \) is \( t \)-connected.
\end{itemize}
The value of the \( (p, t) \)-balloon \( (P, Y) \) is \( \chi(Z) \), where \( Z \) is the set of vertices in \( Y \) nonadjacent to \( v_p \) (hence \( v_p \in Z \)).
\vspace{2mm}
  
\begin{lemma} \emph{(\cite{Tu2023} 2.3)}\label{lem-mgun}
For every graph $G$, and all integers $p, q, s, t \geq 1$, if $G$ contains no $(p,t)$-balloon of value at least $q$, and $G$ contains no $t$-biclique of value $s$, then:
\[
\chi(G) \leq (\sum_{i=0}^{p-1} t^i)(s + t(2t + 9)) + t^p q
\]
\end{lemma}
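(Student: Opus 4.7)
The plan is to proceed by induction on $p$, motivated by the clean recurrence satisfied by the stated bound: writing $f(p) = \sum_{i=0}^{p-1} t^i(s + t(2t+9)) + t^p q$, one checks that $f(p) = t \cdot f(p-1) + (s + t(2t+9))$ with $f(0) = q$. This recurrence strongly suggests that each inductive step should extend a balloon's path by one vertex, paying a multiplicative factor $t$ for branching and an additive term $s + t(2t+9)$ covering a biclique/connectivity threshold.

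The base case $p=0$ is essentially tautological: a $(0,t)$-balloon is read as a $t$-connected set $Y$ of value $\chi(Y)$, so the absence of such a balloon of value $\ge q$ gives $\chi(G) \le q$ via the classical fact that any graph of chromatic number exceeding roughly $t(2t+9)$ contains a $t$-connected induced subgraph of chromatic number almost as large. The constant $t(2t+9)$ in the recurrence is precisely the cost of this extraction step.

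For the inductive step, suppose $\chi(G) > f(p)$ while $G$ has no $t$-biclique of value $s$, and aim to produce a $(p,t)$-balloon of value $\ge q$. First extract a $t$-connected induced subgraph $H$ with $\chi(H) > t \cdot f(p-1) + s$ using the connectivity lemma above. Pick a vertex $v \in H$ and set $H_v := H \setminus N[v]$; the no-biclique assumption caps the chromatic contribution of $N(v) \cap H$ by $s$, so $\chi(H_v) > t \cdot f(p-1)$. Applying the inductive hypothesis inside $H_v$ (possibly after passing to a well-chosen component of large chromatic number, which accounts for the factor $t$) yields a $(p-1,t)$-balloon $(P', Y')$ of value $\ge q$ entirely inside $H_v$. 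Prepending $v$ to $P'$ produces the required $(p,t)$-balloon, using the $t$-connectivity of $H$ to witness that a suitable $Y \supseteq Y'$ remains $t$-connected after incorporating the extension.

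The main obstacle is enforcing the \emph{private-attachment} conditions in the definition of a $(p,t)$-balloon: the new interior vertices $v_1, \ldots, v_{p-2}$ must have no neighbors in the final $Y$, and $v_{p-1}$ must be the unique neighbor of $v_p$ in $Y$. Controlling this requires iteratively restricting $Y'$ to avoid the neighborhoods of each internal path vertex; each such restriction can drop the chromatic number by at most $s$ (since otherwise the common neighbors of a $t$-set of path vertices would form a $t$-biclique of value $\ge s$, contradicting the hypothesis). This is precisely the budget provided by the additive term $s + t(2t+9)$ in the recurrence, and summing contributions across the $\le t$ branches emanating from the $t$-connected root $H$ recovers the multiplicative factor, closing the induction at $\chi(G) \le t \cdot f(p-1) + s + t(2t+9) = f(p)$.
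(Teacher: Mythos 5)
This lemma is imported verbatim from Nguyen, Scott and Seymour (\cite{Tu2023}, 2.3); the paper under review gives no proof of its own, so your argument can only be judged on its merits. You correctly identify the recurrence $f(p) = t\,f(p-1) + s + t(2t+9)$ and the role of the ``$t$-connected induced subgraph of almost the same chromatic number'' extraction lemma, but the inductive step breaks in several concrete places. Most seriously: after extracting the $t$-connected $H$ and passing to $H_v = H \setminus N[v]$, you apply the inductive hypothesis \emph{inside the non-neighbourhood of $v$} and then ``prepend $v$'' to the resulting path $P'$ --- but the first vertex of $P'$ lies in $H_v$ and is therefore nonadjacent to $v$, so $v$ followed by $P'$ is not a path. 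The extension has to route the new edge through $N(v)$, and that is exactly where the real difficulty of the lemma lives: choosing the attachment vertex so that it is the \emph{unique} neighbour of its path-predecessor in $Y$, keeping $Y$ $t$-connected after that surgery, and still controlling $\chi$ of the non-neighbours of the attachment vertex. Second, you invoke the no-$t$-biclique hypothesis to bound $\chi(N(v)\cap H)$ by $s$ for a \emph{single} vertex $v$; the hypothesis only constrains common neighbourhoods of $t$-element sets, and for $t \ge 2$ a single vertex's neighbourhood can have arbitrarily large chromatic number under that hypothesis. The same confusion recurs when you claim that deleting $N(v_i)\cap Y'$ for each internal path vertex costs at most $s$.

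Two further problems. The multiplicative factor $t$ is never actually produced: passing ``to a well-chosen component of large chromatic number'' loses nothing, since the chromatic number of a graph equals the maximum over its components, so your recurrence closes only by fiat. And the base case is wrong as stated: a $(0,t)$-balloon is not defined (the balloon definition and the lemma require $p \ge 1$), and even under your reading the extraction lemma yields only $\chi(G) \le q + t(2t+9)$, not $\chi(G) \le q = f(0)$. The induction must start at $p = 1$ with $f(1) = s + t(2t+9) + tq$, and already there one must explain how the absence of $t$-bicliques of value $s$ together with the absence of $(1,t)$-balloons of value $q$ bounds the chromatic number of the neighbourhood of the chosen root vertex --- which is precisely the mechanism your sketch is missing.
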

  
\begin{theorem}\emph{(\cite{Tu2023} 2.4)}\label{TH D PATH}
For every path $H$ and all $d \ge 1$, there is a polynomial $f$ such that, for all $t \ge 1$,  
if a graph $G$ is $H$-free and does not contain $K_d(t)$ as a subgraph, then $\chi(G) \le f(t)$.
\end{theorem}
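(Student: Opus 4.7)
The plan is to apply Lemma~\ref{lem-mgun} with $p$ chosen to match the forbidden path and then to bound its $t$-biclique parameter $s$ by induction on~$d$. Writing $H = P_p$, I observe that any $(p,t)$-balloon contains an induced $P_p$ by definition, so $P_p$-freeness of $G$ rules out $(p,t)$-balloons entirely; taking $q = 0$ in Lemma~\ref{lem-mgun} therefore gives
\[
\chi(G) \le \left(\sum_{i=0}^{p-1} t^i\right)\bigl(s + t(2t+9)\bigr),
\]
and it remains to bound $s$ by a polynomial in~$t$, where $s$ is an upper bound on the value of every $t$-biclique in~$G$.

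To bound $s$, I would induct on~$d$. For the base case $d = 1$, the graph $K_1(t)$ is edgeless on $t$ vertices, so forbidding it as a subgraph forces $|V(G)| < t$ and $\chi(G) \le t-1$ trivially. For the inductive step with $d \ge 2$, let $(X, Y)$ be any $t$-biclique in~$G$. The key reduction is that $G[Y]$ contains no $K_{d-1}(t)$ as a subgraph: if $t$-sets $V_1, \dots, V_{d-1} \subseteq Y$ witnessed $K_{d-1}(t) \subseteq G[Y]$, then $X, V_1, \dots, V_{d-1}$ would form $d$ pairwise disjoint $t$-sets with all the required cross-edges present — those from $X$ to each $V_i$ by the biclique condition, and those between distinct $V_i, V_j$ from the assumed $K_{d-1}(t)$ — producing $K_d(t) \subseteq G$ and contradicting the hypothesis. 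Since $G[Y]$ also inherits $P_p$-freeness as an induced subgraph of~$G$, the inductive hypothesis yields $\chi(Y) \le f_{d-1}(t)$ for some polynomial~$f_{d-1}$. Hence $s$ may be taken to be $f_{d-1}(t)+1$, and substituting back makes $f_d$ a polynomial in~$t$ whose degree grows by roughly~$p$ at each step of the induction.

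The only point that demands a little care is the combinatorial reduction just described: $X$ is not assumed to be an independent set, so it may contain internal edges. This is harmless because $K_d(t)$ is forbidden as a \emph{subgraph}, not as an induced subgraph, so only the $t^2\binom{d}{2}$ cross-edges between distinct parts need to be present, and any edges inside~$X$ play no role. Once this small observation is recorded, the induction runs cleanly and produces the desired polynomial $f = f_d$ bounding $\chi(G)$ in terms of~$t$.
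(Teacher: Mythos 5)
Your proposal is correct and follows essentially the same route as the cited source \cite{Tu2023} (whose Theorem~2.4 this is) and as the paper's own adaptation in Theorem~\ref{TH 1 B+}: a $P_p$-free graph has no $(p,t)$-balloon at all since the balloon's path is an induced $P_p$, and the $t$-biclique value is bounded by induction on $d$ via the observation that a $K_{d-1}(t)$ inside $Y$ together with $X$ yields a $K_d(t)$ subgraph. The only cosmetic point is that Lemma~\ref{lem-mgun} is stated for $q \ge 1$, so you should take $q = 1$ rather than $q = 0$ (every balloon has value at least $1$ because $v_p \in Z$), which changes nothing in the polynomial bound.
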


\begin{theorem}\emph{(\cite{Tu2023} 3.2)}\label{theotu23-2}
For all $d \ge 1$ and every broom $H$ 
(A broom is obtained from a path with one end v by adding leaves adjacent to v), there is a polynomial $f$ such that, for all $t \ge 1$,  
if a graph $G$ is $H$-free and does not contain $K_d(t)$ as a subgraph, then $\chi(G) \le f(t)$.
\end{theorem}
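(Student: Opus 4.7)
I would prove this by induction on $d$, combining Lemma~\ref{lem-mgun} with a Ramsey argument that extracts the bristles of the broom. Write $H$ as a handle $u_0u_1\cdots u_a$ (a path of length $a$) together with $\ell$ pendant bristles attached at $u_0$. The base case $d=1$ is immediate because $K_1(t)$ is just $t$ vertices with no edges, so $K_1(t)$-freeness forces $|V(G)|<t$ and hence $\chi(G)<t$.

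For the inductive step, observe that any $K_d(t)$-free graph satisfies $\omega(G)<dt$. I would apply Lemma~\ref{lem-mgun} with connectivity parameter $t':=R(\ell,dt)$, path parameter $p:=a+1$, biclique value $s:=f_{d-1}(t)+1$ where $f_{d-1}$ is the polynomial furnished by the inductive hypothesis, and balloon value $q:=1$. The Erd\H{o}s--Szekeres bound makes $t'$ polynomial in $t$ of degree $\ell-1$.

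The $t'$-biclique obligation is discharged by induction: if $(X,Y)$ has $|X|=t'$ and $\chi(G[Y])\ge s$, then $G[Y]$ contains $K_{d-1}(t)$ as a subgraph, and any $t$ vertices of $X$ together with this $K_{d-1}(t)$ form $K_d(t)$ in $G$, a contradiction. The balloon obligation is where the broom structure enters: given a $(p,t')$-balloon $(P,Y)$ with $P=v_1v_2\cdots v_p$, the $t'$-connectivity of $G[Y]$ supplies at least $t'$ neighbors of $v_p$ inside $Y$; since $\omega(G)<dt$, Ramsey's theorem produces $\ell$ pairwise non-adjacent vertices among these neighbors. The defining conditions of the balloon guarantee that these $\ell$ vertices are non-adjacent to every $v_i$ with $i<p$, so $P$ serves as the handle of $H$ and the $\ell$ vertices as its bristles, exhibiting $H$ as an induced subgraph and contradicting $H$-freeness.

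With both hypotheses of Lemma~\ref{lem-mgun} verified, we obtain a bound of the form $\chi(G)\le(\sum_{i=0}^{a}(t')^i)(s+t'(2t'+9))+(t')^{a+1}$, which is polynomial in $t$. The main (and mild) obstacle is calibrating $t'$: it must be large enough for Ramsey to force either the forbidden clique $K_{dt}$ or $\ell$ independent bristles, yet small enough (polynomial in $t$) that the expression above remains polynomial. The Erd\H{o}s--Szekeres estimate $R(\ell,n)=O(n^{\ell-1})$ for fixed $\ell$ is precisely what makes this balance work.
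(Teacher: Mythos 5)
The paper does not prove this statement; it is imported verbatim from \cite{Tu2023} (Theorem 3.2 there), so there is no internal proof to compare against. Your reconstruction is correct and follows the same route as the cited source: induction on $d$ through Lemma~\ref{lem-mgun}, taking the connectivity parameter $t'=R(\ell,dt)$ so that the biclique hypothesis is discharged by the inductive hypothesis (a $K_{d-1}(t)$ in $Y$ plus $t$ vertices of $X$ yields $K_d(t)$), while the balloon hypothesis is discharged by applying Ramsey, with $\omega(G)<dt$, to the at least $t'$ neighbours of $v_p$ inside $Y$ to extract $\ell$ pairwise non-adjacent bristles, which the balloon conditions render non-adjacent to $v_1,\dots,v_{p-1}$, so the broom appears induced; the Erd\H{o}s--Szekeres bound keeps $t'$, and hence the resulting bound, polynomial in $t$.
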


A graph $G$ is called \emph{$d$-degenerate} if every nonnull subgraph has a vertex of degree at most $d$. The degeneracy $\partial(G)$ is the smallest such $d$, Then $\chi(G) \leq \partial(G) + 1$.\\
A \emph{rooted tree} $(H, r)$ consists of a tree $H$ and a vertex $r$ of $H$ called the root. The \emph{height} of $(H, r)$ is the length (number of edges) of the longest path in $H$ with one endpoint $r$. The \emph{spread} of $H$ is the maximum over all vertices $u \in V(H)$ of the number of children of $u$.
Let $\zeta, \eta \ge 1$. The rooted tree $(H, r)$ is $(\zeta, \eta)$-uniform if every vertex with a child has exactly $\zeta$ children;
every vertex with no child is joined to $r$ by a path of $H$ of length exactly $\eta$.

\begin{theorem}\label{theo-scsys} \emph{(\cite{Scott2023} 3.2)}
Let $\eta, t \geq 1$ and $\zeta \geq 2$. For every rooted tree $(H,r)$ with height at most $\eta$ and spread at most $\zeta$, let $c = (\eta + 3)! |H|$. Then $\partial(G) \leq (|H| \zeta t)^c$ for every $H$-free graph $G$ that does not contain $K_{t,t}$ as a subgraph. 
\end{theorem}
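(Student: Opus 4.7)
I would argue by induction on the height $\eta$. The base case $\eta=0$ is trivial: then $H$ is a single vertex, so $H$-free forces $G$ to be empty and $\partial(G)=0$. For the inductive step, let $r_1,\ldots,r_k$ (with $k\le\zeta$) be the children of $r$, and let $(H_i,r_i)$ be the rooted subtree at $r_i$; each has height at most $\eta-1$ and spread at most $\zeta$, so by the inductive hypothesis every $\{H_i\}$-subgraph-free, $K_{t,t}$-subgraph-free graph has degeneracy at most $N_i:=(|H_i|\zeta t)^{(\eta+2)!|H_i|}$. Suppose toward a contradiction that $G$ is $H$-free and $K_{t,t}$-subgraph-free with $\partial(G)>M:=(|H|\zeta t)^{(\eta+3)!|H|}$; then $G$ contains a subgraph $G'$ with $\delta(G')>M$, and it suffices to build a copy of $H$ as a subgraph inside $G'$.

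The heart of the argument is a ``roots'' classification. For each $i\in[k]$, let $X_i\subseteq V(G')$ be the set of vertices that play the role of $r_i$ in some subgraph copy of $H_i$ in $G'$. Because every copy of $H_i$ in $G'-X_i$ would contribute an element to $X_i$ (its image of $r_i$), the subgraph $G'-X_i$ contains no copy of $H_i$; hence $\partial(G'-X_i)\le N_i$ by the inductive hypothesis. Since $\delta(G')>M\gg\sum_iN_i$, a standard averaging shows that all but a negligible fraction of vertices $v\in V(G')$ satisfy $|N_{G'}(v)\cap X_i|\ge M/(2k)$ for every $i$ simultaneously; fix such a $v$ and call these neighbor sets $A_i(v)$.

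Now I would try to build $H$ rooted at $v$ by choosing, for each $i$, some $u_i\in A_i(v)$ to play the role of $r_i$, and then extending from $u_i$ to a copy $C_i$ of $H_i$ using the defining property of $X_i$. The combinatorial obstruction is that the $C_i$ must be pairwise vertex-disjoint, disjoint from $v$, and nonadjacent to $v$ outside of $u_i$. This is where the $K_{t,t}$-freeness enters: if for some $i$ many candidate roots $u_i\in A_i(v)$ force their $H_i$-copies to hit a bounded pool of ``blocked'' vertices, then by pigeonhole that pool together with $A_i(v)$ produces a $K_{t,t}$ subgraph, contradicting the hypothesis. Iterating this selection--and--discard procedure once per vertex of $H$ (there are $|H|$ of them, each contributing a factor of $O(\zeta t)$ to the degree we must pay) yields the required disjoint simultaneous extension.

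\textbf{Main obstacle.} The delicate point is the exponent bookkeeping. Going from $(\eta-1)$ to $\eta$, the exponent $(\eta+2)!|H_i|$ must grow to $(\eta+3)!|H|$; the factor $(\eta+3)$ must absorb both the extra level of recursion and the $|H|/|H_i|$ blow-up inherent in combining $k\le\zeta$ subtree copies around a common root while avoiding shared neighborhoods. Making the $K_{t,t}$-free step produce only a polynomial, rather than exponential, loss per level — via Kővári--Sós--Turán-style arguments inside the sets $A_i(v)$ — is what ultimately keeps the exponent at the clean factorial $(\eta+3)!|H|$ rather than something much worse.
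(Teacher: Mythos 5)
First, note that the paper you are working from does not prove this statement at all: Theorem~\ref{theo-scsys} is quoted verbatim from Scott, Seymour and Spirkl~\cite{Scott2023} (their Theorem~3.2), so there is no in-paper proof to compare against; the relevant benchmark is the argument in~\cite{Scott2023}, which first embeds an arbitrary rooted tree of height $\eta$ and spread $\zeta$ into a $(\zeta,\eta)$-\emph{uniform} tree and then runs the induction on height for uniform trees only. Your plan departs from that route in ways that open two genuine gaps. The most serious is the induced/subgraph confusion: here ``$H$-free'' means that $G$ contains no \emph{induced} copy of $H$, whereas you write that ``it suffices to build a copy of $H$ as a subgraph inside $G'$.'' That cannot be the right target: any graph of minimum degree greater than $|H|$ contains every tree on $|H|$ vertices as a (not necessarily induced) subgraph by a greedy embedding, with no need for $K_{t,t}$-freeness or the exponent $c$, so a subgraph copy of $H$ yields no contradiction with $H$-freeness. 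The whole content of the theorem is forcing an \emph{induced} copy, i.e.\ excluding every extra adjacency: from $v$ to the non-root vertices of each $C_i$, between distinct copies $C_i$ and $C_j$, and inside each $C_i$. You acknowledge only the first of these, and the mechanism you propose (``many $H_i$-copies hitting a bounded pool of blocked vertices gives $K_{t,t}$ by pigeonhole'') does not work as stated: a copy of $H_i$ merely \emph{containing} a blocked vertex creates no adjacency between that vertex and the root $u_i$, so no biclique appears. A K\H{o}v\'ari--S\'os--Tur\'an-style step needs a large set of candidate roots each having at least $t$ neighbours in a bounded set, which your setup does not supply.

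The second gap is the ``standard averaging'' claim that almost every vertex has at least $M/(2k)$ neighbours in every $X_i$ simultaneously. This does not follow from $\delta(G')>M$ together with $\partial(G'-X_i)\le N_i$ for each $i$: take $G'=K_{n/2,n/2}$ with $X_1$ and $X_2$ the two sides of the bipartition; then $\delta(G')=n/2$ is arbitrarily large and each $G'-X_i$ is edgeless, yet no vertex has even one neighbour in both $X_1$ and $X_2$. (The single-$i$ version of your claim is correct, and this is precisely why \cite{Scott2023} reduces to uniform trees first: there all child subtrees are isomorphic, so there is only one set $X$ of candidate roots and the simultaneity problem never arises.) To salvage your outline you would need either that reduction or a genuinely new device for handling non-isomorphic child subtrees; as written, the inductive step does not go through.
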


\section{Main results}

We define the following trees and graph classes, which are fundamental to our main results:

\begin{definition}\label{DEF S+}
A \textit{2-arm star} is an induced tree that can be regarded as a subdivision of a star graph with two arms. 
It has a central vertex ($u_0$) which, for $t \ge 5$, is adjacent to $t - 4$ pendant vertices, 
and two additional induced paths incident with $u_0$: one of length $4$ and the other of length $p$ ($p \ge 1$). 
This tree is denoted by $S(1^{(t-4)}, 4, p)$.
A \emph{2-arm star tree}, denoted by $ S(1^{(t-4)}, 4, p)$ (Figure 2.1(b)).
\end{definition}

\begin{definition}\label{DEF B+}

$B^{+}(p+2, t-1)$ is an induced tree resembling a broom, 
where the central vertex $u_0$ has $t-1$ pendant leaves for $t \ge 3$, 
and there exists a path of length $p+2$ for $p \ge 2$. 
In addition, at the vertex ($u_2$), which is at distance $2$ from the central vertex, 
there is one additional pendant vertex ($v$) attached (Figure 2.1(c)).

\end{definition}

\begin{definition}\label{DEF H}
We define the class $\mathcal{H}$ to be the family of graphs that are both $C_4$-free and \emph{$p$-flag}-free, 
where a \emph{$p$-flag} is an induced graph obtained from a cycle $C_3$ 
by attaching a path of length $p$ ($p \ge 1$) to one vertex of the $C_3$ (Figure 2.1(a)).
\end{definition}

\begin{figure}[H]
    \centering
    \begin{minipage}[b]{0.45\textwidth}
        \centering
        \begin{tikzpicture}[scale=1,
            every node/.style={circle, draw, fill=white, inner sep=1.5pt},
            vertex/.style={circle, draw, fill=black!10, inner sep=1.5pt},
            label style/.style={font=\small}]
            
            \begin{scope}
                \node (v1) at (-1, 0) {};
                \node (v2) at (0, 0.5) {};
                \node (v3) at (0, -0.5) {}; 
                \draw (v1) -- (v2) -- (v3) -- (v1);
                \draw[line width=1pt, red, smooth, tension=1] 
                    plot coordinates {(-3.5, 0) (-3, 0.2) (-2.5, -0.2) (-2, 0.1) (-1.5, 0) (v1)};
                \node[draw=none, fill=none, red] at (-2, 0.7) {p-path};
               
            \end{scope}
        \end{tikzpicture}
        \subcaption{\emph{$p$-flag}}
        \label{fig:sub1}
    \end{minipage}
    \hfill
    \begin{minipage}[b]{0.45\textwidth}
        \centering
        \begin{tikzpicture}[scale=1.1,
            vertex/.style={circle, draw, fill=black!10, inner sep=1.5pt}]
            
            \begin{scope}
                \node (u0) at (0,0) [vertex, label=above:$u_0$] {};
                \node (v1) at (-0.5,-0.8) [vertex, label=below:$v_1$] {};
                \node (v2) at (0.0,-0.8) [vertex, label=below:$v_2$] {};
                \node (vtm4) at (0.6,-0.8) [vertex, label={[yshift=-20pt, xshift=5pt]$v_{t-4}$}] {};
                \node[draw=none, fill=none] at (0.3,-0.8) {$\cdots$};
                \draw (u0)--(v1); \draw (u0)--(v2); \draw (u0)--(vtm4);

                \node (a1) at (1.1,0) [vertex, label=below:$u_1$] {};
                \node (a2) at (2.0,0) [vertex, label=below:$u_2$] {};
                \node (a3) at (2.9,0) [vertex, label=below:$u_3$] {};
                \node (a4) at (3.8,0) [vertex, label=below:$u_4$] {};
                \draw (u0)--(a1)--(a2)--(a3)--(a4);

                \draw[line width=1pt, red, smooth, tension=1] 
                    plot coordinates {(-1.8,0) (-1.3,0.2) (-0.8,-0.2) (-0.3,0.1) (u0)};
                \node[draw=none, fill=none, red] at (-1,0.7) {p-path};
              
            \end{scope}
        \end{tikzpicture}
        \subcaption{$S(1^{(t-4)}, 4, p)$}
        \label{fig:sub2}
    \end{minipage}
    
    \vspace{1cm}
    
    \begin{minipage}[b]{0.45\textwidth}
        \centering
        \begin{tikzpicture}[scale=1.2,
            vertex/.style={circle, draw, fill=black!10, inner sep=1.5pt}]
            
            \begin{scope}
                \node (u0) at (3.0, 0) [vertex, label=below:$u_0$] {};
                \node (u1) at (2.0, 0) [vertex, label=below:$u_1$] {};
                \node (u2) at (1.0, 0) [vertex, label=below:$u_2$] {}; 
                \draw (u0) -- (u1) -- (u2);

                \node (v1) at (3.6, 0.6) [vertex, label={[xshift=6pt, yshift=-8pt]$v_1$}] {};
                \node (v2) at (3.6, 0.0) [vertex, label={[xshift=6pt, yshift=-8pt]$v_2$}] {}; 
                \node[draw=none, fill=none] (dots_v) at (3.6, -0.4) {$\vdots$}; 
                \node (vt_1) at (3.6, -0.8) [vertex, label={[xshift=12pt, yshift=-10pt]$v_{t-1}$}] {}; 

                \draw (u0) -- (v1); \draw (u0) -- (v2); \draw (u0) -- (vt_1);

                \node (v_extra) at (1.0, 0.7) [vertex, label=above:$v$] {};
                \draw (u2) -- (v_extra);

                \draw[line width=1pt, red, smooth, tension=1] 
                    plot coordinates {(-1.2, 0) (-0.7, 0.2) (-0.3, -0.2) (0.2, 0.1) (0.6, 0) (u2)};
                \node[draw=none, fill=none, red] at (0, 0.7) {p-path};
              
            \end{scope}
        \end{tikzpicture}
        \subcaption{$B^{+}(p+2, t-1)$}
        \label{fig:sub3}
    \end{minipage}
    \hfill
    \begin{minipage}[b]{0.45\textwidth}
        \centering
        \begin{tikzpicture}[scale=0.8,
            vertex/.style={circle, draw, fill=black!10, inner sep=1.5pt}]
            
            \begin{scope}
                \node (u0) at (0,0) [vertex, label=above:$u_0$] {};
                \node (u1) at (-1.5,0) [vertex, label=above:$u_1$] {};
                \node (u2) at (-3,0) [vertex, label=above:$u_2$] {};
                \node (u3) at (-4.5,0) [vertex, label=above:$u_3$] {};

                \node (v1) at (0.8,1.5) [vertex, label=above:$v_1$] {};
                \node (v2) at (0.8,0.5) [vertex, label=above:$v_2$] {};
                \node (vt) at (0.8,-1.5) [vertex, label=above:$v_t$] {};
                
                \coordinate (vdots_center) at (0.8,0); 
                \draw[dotted, thick] ([yshift=-0.2cm] vdots_center) -- ([yshift=0.2cm] vdots_center);

                \draw (u0) -- (u1) -- (u2) -- (u3);
                \draw (u0) -- (v1); \draw (u0) -- (v2); \draw (u0) -- (vt);
            \end{scope}
        \end{tikzpicture}
        \subcaption{(t,2)-broom}
        \label{fig:sub4}
    \end{minipage}
    
    \caption{}
    \label{fig:all_figures}
\end{figure}

\subsection{\(\{P_6, (2,2)\text{-broom}\}\)-free}

We define the following classes.\\

Let \( \mathcal{M} \) be a hereditary class of \(\{P_6, (2,2)\text{-broom}\}\)-free graphs with binding function \( f \). For every \( G \in \mathcal{M} \), let \( X \) be a minimal cutset of \( G \) containing some vertex \( v \) (possibly after replacing \( v \) with another vertex).

The class \( \mathcal{L}_i \subseteq \mathcal{M} \), where $i \in \mathbb{N}_{\geq 2}$, is defined as follows.

\medskip
\noindent
\textbf{Case 1.} If $X = K_m$ with $m \geq 1$, then there exists a component $B_1$ of $G \setminus X$ 
and a vertex $y \in N_{B_1}(v)$ such that $B_1$ contains a subgraph $F$ satisfying:
\begin{enumerate}
    \item $V(F) \nsubseteq N_{B_1}(v)$
    \item $N_{B_1}(y) \cap V(F) \neq \emptyset$
    \item $\chi(F) > f\!\left(\left\lfloor \tfrac{\omega(G)}{i} \right\rfloor\right)$ (Although $\chi(F) \le f(\omega(F)) \le f(\omega(G))$ holds, for every subgraph $F$ there exists a sufficiently large $j \in \mathbb{N}_{\geq 2}$ such that $\chi(F) > f\!\left(\left\lfloor \tfrac{\omega(G)}{j} \right\rfloor\right)$)
\end{enumerate}
Moreover, there exists another component $B_2$ such that for some vertex $u \in B_2$ we have 
$v \notin N_X(u)$.

\medskip
\noindent
\textbf{Case 2.} If $X \neq K_m$ with $m \geq 1$, then there exists a vertex $w \in X$ nonadjacent to $v$, 
and a component $B_1$ of $G \setminus X$ with a vertex $y$ such that 
$y \in N_{B_1}(v)$ and $y \notin N_{B_1}(w)$.  
Furthermore, $B_1$ contains a subgraph $F$ satisfying the three conditions listed above.\\

The two cases above stem from the fact that either there exists a vertex in $B_2$ that is not adjacent to any vertex of $X$, or no such vertex exists.
It should be noted that it is clear that the simultaneous existence of vertices $u$ and $w$ does not occur. 
Moreover, if the vertex $f$ in $B_2$ does not exist to form the induced path $v\!-\!f\!-\!w$, 
there always exists a minimal induced path between $v$ and $w$ through $B_2$ of length greater than 2 (Figure 2.2).

\begin{theorem}\label{TH 00}
For the class $\mathcal{L}_i$, where $i \in \mathbb{N}_{\geq 2}$, there exists a binding function whose growth is at most linear.
\end{theorem}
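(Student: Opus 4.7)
The plan is to argue that any graph $G$ satisfying the defining cutset conditions of $\mathcal{L}_i$ already harbors either an induced $P_6$ or an induced $(2,2)$-broom, so that the configuration is incompatible with $G \in \mathcal{M}$. Consequently $\mathcal{L}_i$ is empty and the statement of the theorem holds vacuously, in particular with a linear binding function.

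Fix $G \in \mathcal{L}_i$ together with a minimal cutset $X \ni v$, components $B_1, B_2$ of $G \setminus X$, a vertex $y \in N_{B_1}(v)$, and a subgraph $F \subseteq B_1$ satisfying conditions (1)--(3). After replacing $F$ by a connected component of maximum chromatic number I may assume $F$ is connected, and using monotonicity of $f$ together with $\chi(F) \le f(\omega(F))$, condition (3) forces $\omega(F) > \lfloor \omega(G)/i \rfloor$; let $K \subseteq V(F)$ be a clique of that size, held in reserve. Fix $y' \in N(y) \cap V(F)$ from (2), $z \in V(F) \setminus N(v)$ from (1), and a shortest (hence induced) $y'$--$z$ path $y' = w_0, w_1, \ldots, w_\ell = z$ in $F$.

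On the $B_2$-side, in Case~1 the minimality of $X$ provides some $u^* \in B_2 \cap N(v)$ while the definition supplies $u \in B_2 \setminus N(v)$; cutting a shortest $u^*$--$u$ path in $B_2$ at the first non-neighbor $s_m$ of $v$ yields an induced path $s_m \sim s_{m-1} \sim v$ whose inner vertices lie in $B_2$ and are therefore non-adjacent to $B_1$. In Case~2, the given vertex $w \in X \setminus N(v)$, together with minimality of $X$ and the parenthetical remark following the definition, produces an induced $v$--$w$ path of length $\ge 2$ through $B_2$ with no chord back to $B_1$; the condition $y \notin N_{B_1}(w)$ is what keeps this path disjoint from $y$ and $F$ on the $B_1$-side.

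Gluing the $B_2$-side path to the $B_1$-side path $v \sim y \sim y' \sim w_1 \sim \cdots \sim z$ at the common vertex $v$ (and in Case~2 also through $w$) reduces the problem to a finite case analysis on the adjacencies of $y'$ and $w_1$ to $v$ (and, in Case~2, of $y$ and $y'$ to $w$ and to the interior of the $v$--$w$ path). In the generic sub-case the six vertices $\{s_m, s_{m-1}, v, y, y', w_1\}$ induce $P_6$; whenever short-circuit edges appear (such as $v \sim w_1$, or $y$ having two independent neighbors inside $F$), these same six vertices---or six vertices obtained by replacing $w_1$ with a suitable element of $K$---induce a $(2,2)$-broom. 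The main obstacle is closing the last handful of sub-cases in which several short-circuit edges coexist: these are precisely the cases in which $F$ must supply \emph{two} extra leaves rather than one, and it is the reservoir clique $K$ guaranteed by condition~(3) that allows those leaves to be drawn without reintroducing unwanted adjacencies. The Case~2 analysis follows the same template, with one additional parameter (the length of the $v$--$w$ path through $B_2$) handled by choosing the path to be minimal.
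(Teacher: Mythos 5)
Your overall strategy---showing that the defining configuration of $\mathcal{L}_i$ forces an induced $P_6$ or $(2,2)$-broom, so that $\mathcal{L}_i$ is empty and the theorem holds vacuously---does not work, because $\mathcal{L}_i$ is not empty. The configuration in the definition is perfectly consistent with $\{P_6,(2,2)\text{-broom}\}$-freeness: take $F$ to be a large clique with $y$ complete to $V(F)$ and only part of $F$ adjacent to $v$. Then the path $v\text{-}y\text{-}y'\text{-}w_1\text{-}\cdots$ you try to build on the $B_1$-side collapses immediately ($y'$ and $w_1$ are adjacent to each other \emph{and} both to $y$), no $P_6$ appears, and no $(2,2)$-broom can be extracted from your ``reservoir clique'' $K$ because any two leaves drawn from a clique are adjacent. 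This is exactly the sub-case your sketch leaves open, and it is not a residual technicality---it is the whole content of the theorem.

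The paper's proof goes the other way: it accepts that graphs in $\mathcal{L}_i$ exist and instead shows that the forbidden subgraphs \emph{constrain the structure of $F$}. First, any $y_1\in N(v)\cap B_1$ with a neighbour in $F_1$ must be complete to $F_1$, since a mixed edge $ab$ (with $y_1\sim a$, $y_1\nsim b$) together with $v$ and the path into $B_2$ yields an induced $P_6$ (this is your ``generic sub-case,'' and it is the only place that argument is used). Second, $F_1$ must be $P_3$-free---hence a complete graph---since an induced $P_3$ inside $F_1$ together with the completeness from the first step produces a $(2,2)$-broom. From $F_1$ complete one gets $\chi(F_1)=\omega(F_1)\le\omega(G)$, and combining with condition (3) of the definition gives $f\bigl(\lfloor\omega(G)/i\rfloor\bigr)<\omega(G)$, which is what bounds $f$ linearly. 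Your proposal never reaches a statement of this form: even if all your case analysis were completed, ``the configuration cannot occur'' proves the wrong thing, and the admitted unfinished sub-cases (several short-circuit edges coexisting) are precisely the ones where the correct conclusion is not a contradiction but the structural fact that $F$ is a clique.
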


\begin{proof}
Assume, for the sake of contradiction, that every binding function for the class $\mathcal{L}_i$ has superlinear growth. Let $f$ be such a binding function for $\mathcal{L}_i$. let \( X \) is a minimal cutset of
\( G \); so \( G \setminus X \) has at least two components, and every vertex in \( X \) has a neighbour in \( V(B_t) \),
for every component \( B_t \) of \( G \setminus X \).
Choose \( v \in X \), and let \( N \) be the set of vertices in \( B_1 \) adjacent to \( v \).  
Let the components of \(  B_1  \setminus N \) be \( F_1, \ldots, F_k, Q_1, \ldots, Q_\ell \), where \( F_1, \ldots, F_k \) each have chromatic number more than \( f(\lfloor \omega(G)/i \rfloor) \), and \( Q_1, \ldots, Q_\ell \) each have chromatic number at most \( f(\lfloor \omega(G)/i \rfloor) \).
Let \( Y_j \) be the set of vertices in \( N \) with a neighbour in \( V(F_j) \), and according to the definition of the class $\mathcal{L}_i$, without loss of generality, assume that $F = F_1$.
we will prove 1 and 2.

\begin{enumerate}[label={(\arabic*)}]
    \item All vertices in \( Y_1 \) are adjacent to \( F_1 \).\\
    Let \( y_1 \in Y_1 \). Thus, \( y_1 \) has a neighbour in \( V(F_1) \); suppose that \( y_1 \) is not adjacent to some vertex in \( F_1 \). Since \( F_1 \) is connected, there is an edge \( ab \) of \( F_1 \) such that \( y_1 \) is adjacent to \( a \) and not to \( b \). Since \( G \in \mathcal{L}_i \), the vertices \( \{a, b, y_1, v, f, u\} \)(or \( \{a, b, y_1, v, f, w\} \)) (as illustrated in Figure~\ref{fig:classL}) would induce a \( P_6 \), a contradiction.

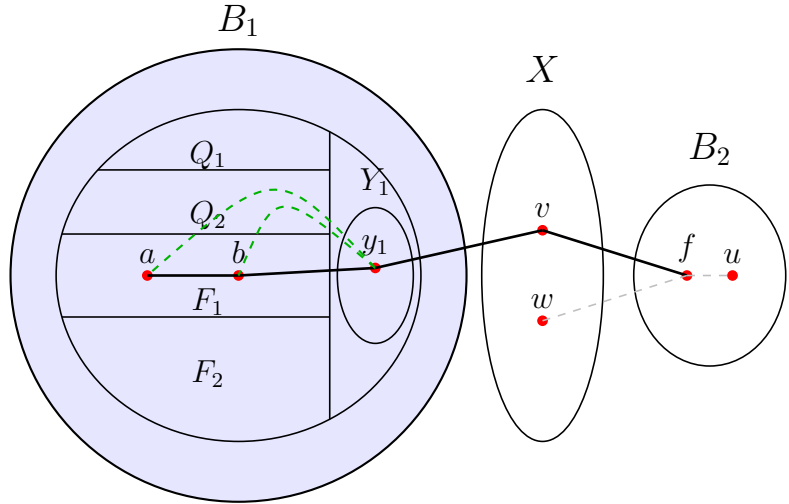
\begin{figure}[h!]
    \centering
    \begin{tikzpicture}[
        scale=1.0,
        every node/.style={font=\normalsize, inner sep=1pt, outer sep=1pt},
        line width=0.6pt
    ]

        \begin{scope}[shift={(-4,0)}]
            \fill[blue!10] (0,0) circle (3cm); 
            \draw[thick] (0,0) circle (3cm);
            \node[above=3.1cm, font=\large\bfseries] at (0,0) {$B_1$};

            \draw (0,0) ellipse (2.4cm and 2.2cm);

            \pgfmathsetmacro{\yvertmax}{sqrt(2.2^2*(1-(1.2^2/2.4^2)))}
            \draw (1.2cm, \yvertmax) -- (1.2cm, -\yvertmax);

            \pgfmathsetmacro{\xleftsone}{sqrt(2.4^2*(1-(1.4^2/2.2^2)))}
            \draw (-\xleftsone, 1.4cm) -- (1.2cm, 1.4cm);
            \node at (-0.4cm, 1.6cm) {$Q_1$};

            \pgfmathsetmacro{\xleftsj}{sqrt(2.4^2*(1-(0.55^2/2.2^2)))}
            \draw (-\xleftsj, 0.55cm) -- (1.2cm, 0.55cm);
            \node at (-0.4cm, 0.8cm) {$Q_2$};

            \pgfmathsetmacro{\xleftri}{sqrt(2.4^2*(1-(-0.55)^2/2.2^2))}
            \draw (-\xleftri, -0.55cm) -- (1.2cm, -0.55cm);
            \node at (-0.4cm, -0.35cm) {$F_1$};

            \coordinate (a) at (-1.2cm, 0.0cm);
            \coordinate (b) at (0.0cm, 0.0cm);
            \coordinate (y1) at (1.8, 0.1);

            \fill[red] (a) circle (2pt);
            \node[above=0.1cm] at (a) {$a$};

            \fill[red] (b) circle (2pt);
            \node[above=0.1cm] at (b) {$b$};

            \fill[red] (y1) circle (2pt);
            \node[above=0.1cm] at (y1) {$y_1$};

            \draw[line width=1pt]  (b) -- (y1);
            \draw[line width=1pt] (a) -- (b);

            \node at (-0.4cm, -1.3cm) {$F_2$};

            \draw (1.8cm, 0) ellipse (0.5cm and 0.9cm);
            \node[above=1.0cm] at (1.8cm, 0.0) {$Y_1$};

            \draw[dashed, green!70!black, line width=0.8pt] 
                (y1) .. controls (0.5,1.5) .. (a);
            \draw[dashed, green!70!black, line width=0.8pt] 
                (y1) .. controls (0.5,1.2) .. (b);
        \end{scope}

        \begin{scope}[shift={(0,0)}]
            \draw (0,0) ellipse (0.8cm and 2.2cm);
            \node[above=2.5cm, font=\large\bfseries] at (0,0) {$X$};

            \fill[red] (0, 0.6cm) circle (2pt);
            \node[above=0.1cm] at (0, 0.6cm) {$v$};
            \coordinate (v) at (0, 0.6);

            \fill[red] (0, -0.6cm) circle (2pt);
            \node[above=0.1cm] at (0, -0.6cm) {$w$};
            \coordinate (w) at (0, -0.6);
        \end{scope}

        \begin{scope}[shift={(2.2,0)}]
            \draw (0,0) ellipse (1.0cm and 1.2cm);
            \node[above=1.4cm, font=\large\bfseries] at (0,0) {$B_2$};

            \fill[red] (0.3cm, 0) circle (2pt);
            \node[above=0.1cm] at (0.3cm, 0) {$u$};

            \fill[red] (-0.3cm, 0) circle (2pt);
            \node[above=0.1cm] at (-0.3cm, 0) {$f$};
            \coordinate (f) at (-0.3, 0);

            \draw[dashed, gray!50] (-0.3cm,0) -- (0.3cm,0);
        \end{scope}

        \draw[line width=1pt] (y1) -- (v);
        \draw[line width=1pt] (v) -- (f);
        \draw[dashed, gray!50] (w) -- (f);

    \end{tikzpicture}
    \caption{A schematic representation of a graph in the class $\mathcal{L}_i$.}
    \label{fig:classL}
\end{figure}
    \item The subgraph \( F_1 \) does not contain \( S_2 \) as an induced subgraph.\\
Suppose that \( S_2 \) occurs in \( F_1 \)(\(S_k\) is a star with \(k+1\) vertices). Then, by (1), a \( (2,2) \)-broom would occur, which is a contradiction.
We observe that $\chi(F_1)=\omega(F_1)$, because the \( S_2 \)-free property forces the induced subgraph $F_1$ to be complete.
\end{enumerate}
 we have \( f\!\left(\left\lfloor \tfrac{\omega(G)}{i} \right\rfloor\right) < \chi(F_1)=\omega(F_1)\le \omega(G) \), and therefore \( f(\omega(G)) < i \omega(G) \).
This contradicts the superlinear growth assumption, completing the proof.
\end{proof}

\subsection{The Class that does not contain $K_d(t)$ as a subgraph}

\subsubsection{$B^{+}(p+2, t-1)$-free }

\begin{lemma}\label{lem ba B+}
Let $t \ge 3$ and $p\ge 2$. If \(G \in \mathcal{H}\) is \(B^{+}(p+2, t-1)\)-free, then for every $(p, t)$-balloon, \( (P, Y) \) of $G$ with the common vertex $v_p$ between the path $P$ and $Y$, we have $\Delta(Y[N^{\geq 2}(v_p)]) < R(t, \omega)$, where $\omega = \omega(G)$.
\end{lemma}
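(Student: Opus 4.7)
The plan is by contradiction: assume some $u \in Y \cap N^{\geq 2}(v_p)$ has $\deg_{Y[N^{\geq 2}(v_p)]}(u) \geq R(t,\omega)$, and let $D$ denote its neighborhood inside $Y[N^{\geq 2}(v_p)]$. Applying Ramsey's theorem to $G[D]$ yields either a clique of size $\omega$, which adjoined to $u$ produces a clique of size $\omega+1$ (contradicting $\omega(G)=\omega$), or an independent set $I = \{w_1,\dots,w_t\}$ of size $t$. We work in the latter case and construct an induced $B^{+}(p+2,t-1)$ in $G$ to reach the contradiction.

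The construction takes $u_0 := u$ as the central vertex with pendant leaves $w_1,\dots,w_{t-1}$, and $u_1 := w_t$ as the first vertex of the long path. Since $G[Y]$ is $t$-connected, Menger's theorem (fan version) produces $t$ pairwise internally disjoint paths from $I$ to $v_p$ in $G[Y]$; as $u \notin I \cup \{v_p\}$, at most one of these uses $u$ as an internal vertex, so at least one path $Q\colon w_t = a_0, a_1, \dots, a_\ell = v_p$ avoids $u$, with $\ell \geq 2$ since $w_t \not\sim v_p$. Concatenating $Q$ with the tail of $P$ gives the walk $u, w_t, a_1, \dots, a_\ell, v_{p-1}, \dots, v_1$ of length $\ell + p$, which can be truncated to exactly $p+2$ edges to become the long arm of the candidate $B^{+}$.

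Verifying inducedness employs both exclusion hypotheses defining $\mathcal{H}$. The balloon definition states that no vertex among $v_1, \dots, v_{p-2}$ has any neighbor in $Y$, and that $v_{p-1}$ has $v_p$ as its unique $Y$-neighbor; this kills every potential chord involving the $P$-tail. $C_4$-freeness forces the common neighbors of any non-adjacent pair to form a clique, so a chord $w_i a_j$ with $i < t$ would make $a_j$ a second common neighbor of the non-edge $\{w_i, w_t\}$ besides $u$, and hence force $u \sim a_j$. The latter adjacency is in turn blocked by $p$-flag-freeness: if $u \sim a_j$, then the triangle $\{u, w_t, a_j\}$ together with the induced pendant path $a_j, a_{j+1}, \dots, a_\ell = v_p, v_{p-1}, \dots$ of length at least $p$ would form an induced $p$-flag, a contradiction.

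The remaining delicate step is supplying the extra pendant leaf $v$ at $u_2 := a_1$. By $t$-connectedness, $a_1$ has at least $t \geq 3$ neighbors in $Y$, so candidates beyond $w_t$ and $a_2$ exist, and the task is to prune, via $C_4$- and $p$-flag-freeness, to one that is private relative to the rest of the $B^{+}$. I expect this to be the main obstacle. A natural approach is a case-split on $\ell$: when $\ell = 2$, re-parameterize the long arm with $u_2 := v_p$, allowing $v_{p-1}$ (adjacent to $v_p$ and, by the balloon definition, to no other vertex of $Y$) to serve as the required pendant directly; when $\ell > 2$, argue that any candidate neighbor of $a_1$ forced to have a chord to $\{u\} \cup \{w_1, \dots, w_{t-1}\} \cup V(P)$ would invoke a smaller induced $p$-flag or $C_4$, leaving at least one private neighbor. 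Completing this step produces the forbidden induced $B^{+}(p+2,t-1)$, whence the stated bound $\Delta(Y[N^{\geq 2}(v_p)]) < R(t,\omega)$ follows.
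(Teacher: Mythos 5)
Your opening matches the paper's: both proofs suppose some $u=z_k\in N^{\ge 2}(v_p)$ has at least $R(t,\omega)$ neighbours in $Y[N^{\geq 2}(v_p)]$, extract an independent set of size $t$ by Ramsey (a clique of size $\omega$ in $N(u)$ being impossible), and aim to make $u$ the centre of an induced $B^{+}(p+2,t-1)$ with the independent vertices as leaves. The divergence --- and the gap --- is in how the long arm is routed. The paper runs the arm from $u$ \emph{down the BFS layers} $N^{k}(v_p),N^{k-1}(v_p),\dots,N^{1}(v_p)$ along a shortest path $z_k z_{k-1}\cdots$ into $v_p$ and then along $P$, and it first discards $N(z_{k-1})$ and $N(z_{k-2})$ from the Ramsey host set (showing via $C_4$- and $p$-flag-freeness that nothing is lost); the layer structure then blocks every chord from a leaf to a later arm vertex, since leaves live in layers $k-1,k,k+1$ and vertices whose layers differ by two or more are nonadjacent. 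Your arm instead runs \emph{away} from $u$ through a Menger path $Q=w_t a_1\cdots a_\ell=v_p$, and you have no comparable control. The $C_4$ argument you give for a chord $w_i a_j$ needs $a_j$ to be a common neighbour of the non-edge $\{w_i,w_t\}$, which holds only for $j=1$; for $j\ge 2$ the adjacency $w_i\sim a_j$ creates neither a $C_4$ nor a $p$-flag and cannot be excluded, and since $\ell$ is unbounded such chords can contaminate all $t-1$ leaves. The same problem hits the centre: your $p$-flag argument for $u\nsim a_j$ needs the triangle $\{u,w_t,a_j\}$, i.e.\ $j=1$ (and $C_4$-freeness handles $j=2$), but $u\sim a_j$ for $j\ge 3$ produces only a long hole, which $\mathcal{H}$ does not forbid. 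Menger also returns internally disjoint paths, not induced ones, so even the chords $a_i a_j$ inside $Q$ need a separate fix.

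The pendant at $u_2$, which you yourself flag as the main obstacle, is where the paper spends most of its effort: a four-way case analysis on the layer $k$ of the high-degree vertex, producing in each case an explicit private neighbour ($a_2$, or an $f_i$ in layer $2$, $3$ or $4$) whose nonadjacency to everything else is certified by $C_4$- and $p$-flag-freeness. Your sketch for this step does not work as stated: when $\ell=2$ the arm is $u,w_t,a_1,v_p,v_{p-1},\dots$, so $v_p$ sits at distance $3$ from the centre, not $2$, and $v_{p-1}$ cannot serve as the pendant at $u_2$. To repair the proof you would essentially have to re-import the paper's mechanism --- route the arm along a shortest path from $u$ to $v_p$, prune the Ramsey host set by the neighbourhoods of the first two arm vertices, and only then hunt for the pendant --- at which point you have reproduced the paper's argument rather than found an alternative to it.
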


\begin{proof}
If \(G\) is \(B^{+}(p+2, t-1)\)-free. Consider a $(p,t)$-balloon $Y$ with $(a_1, \dots, a_{t'})\in N(v_p)$, where $t' \ge t$(Because the balloon $Y$ is $t$-connected).
Suppose to the contrary that there exists $z_k \in N^k(v_p)$ for some $k \geq 2$ such
that $|N(z_k) \cap N^{\geq 2}(v_p)| \geq R(t, \omega)$. Then it is clear that the induced path $z_k z_{k-1} \dots a_i v_p$ exists.

By the definition of a balloon and (1) since \(G\) is  
\emph{$p$-flag}-free, we have \(a_i \nsim a_j\) for all \(i , j\).
(2) Because \(G\) is \(C_4\)-free, every vertex in \(N^{2}(v_p)\) is adjacent to at most one vertex of \(N^1(v_p)\); and no vertex \(a_i\) can be adjacent to two vertices \(u,v\) such that the path \(u w v\) lies entirely within the second layer.  
(3) It is also clear that within \(N^{\ge 2}(v_p)\) there are no adjacencies between vertices whose layers differ by two or more.\\
Let $B = (b_1, \dots, b_t)$ be an independent set in $Y\left[(N(z_k) \cap N^{\geq 2}(v_p)\right]$.
\[
|N(z_k) \cap N^{\geq 2}(v_p)| = |(N(z_k) \cap N^{\ge 2}(v_p)) \cap N(z_{k-2})| + |(N(z_k) \cap N^{\geq 2}(v_p)) \setminus N(z_{k-2})|
\]

Due to the $C_4$-free property, $|(N(z_k) \cap N^{\geq 2}(v_p)) \cap N(z_{k-2})| = 0$. Therefore, $|(N(z_k) \cap N^{\ge 2}(v_p)) \setminus N(z_{k-2})| \ge R(t,\omega)$.

$|(N(z_k) \cap N^{\ge 2}(v_p)) \setminus N(z_{k-2})| = |(N(z_k) \cap N^{\geq 2}(v_p) \setminus N(z_{k-2})) \cap N(z_{k-1})| + |(N(z_k) \cap N^{\ge 2}(v_p) \setminus N(z_{k-2})) \setminus N(z_{k-1})|$

Due to the \emph{$p$-flag}-free property, $|(N(z_k) \cap N^{\geq 2}(v_p) \setminus N(z_{k-2})) \cap N(z_{k-1})| = 0$. Therefore, $|(N(z_k) \cap N^{\geq 2}(v_p) \setminus N(z_{k-2})) \setminus N(z_{k-1})| \geq R(t, \omega)$

So $B = (b_1, \dots, b_t)$ is an independent set in 
$Y\left[(N(z_k) \cap N^{\geq 2}(v_p) \backslash N(z_{k-2})) \backslash N(z_{k-1})\right]$.

We consider adjacencies to specific vertices without loss of generality (by shift-invariance of labeling).

\noindent \textbf{Case 1: \( z_k \in N^2(v_p) \).} \\
Suppose \( z_2 \sim a_1 \) (so \( z_2 \nsim a_i \) for \( i \ne 1 \) by Property 2). Then \( (b_i)z_2a_1v_p \) is induced. 
If some \( b_i \in N^3(v_p) \), then \( b_i \nsim a_j \). 
If some \( b_i \in N^2(v_p) \), then since no \( b_i \) is adjacent to two \( a_j \) and no \( a_j \) is adjacent to two \( b_i \) (by \( C_4 \)-free), there is a bijection between the \( b_i \) in \( N^2(v_p) \) and the \( a_j \). 
Thus, even if all \( a_j \) are adjacent to some \( b_i \), by deleting one \( b_i \in N^2(v_p) \) adjacent to \( a_2 \), we obtain the induced \( (b_1,\dots,b_{t-1})z_2a_1v_p a_2v_{p-1}v_{p-2}...v_{1}\).

\noindent \textbf{Case 2: \( z_k \in N^3(v_p) \).} \\
Suppose \( z_2 \sim a_1 \). Then \( (b_i)z_3z_2a_1 \) is induced. 
Since \( t \ge 3 \), \( a_1 \) has another neighbor \( f_2 \in N^2(v_p) \). 
Clearly \( f_2 \nsim z_2, z_3 \) (by \( C_4 \)-free,\emph{$p$-flag}-free). 
If \( f_2 \sim b_3 \in N^3(v_p) \), then \( f_2 \nsim \) \( b_{i\neq 3} \in N^2(v_p) \cup N^3(v_p) \) (by \( C_4 \)-free). 
If \( f_2 \sim b_2 \in N^2(v_p) \), then for \( b_{i\neq2} \in N^3(v_p) \), \( f_2 \nsim b_i \) (else \( f_2b_2z_3b_i \) is a \( C_4 \)). 
Similarly, for \( b_{j\neq 2} \in N^2(v_p) \), \( f_2 \nsim b_j \) (else \( f_2b_2z_3b_j \) is a \( C_4 \)).
Hence we find the induced \( (b_1,\dots,b_{t-1})z_3z_2 f_2 a_1v_p...v_1 \).

\noindent \textbf{Case 3: \( z_k \in N^4(v_p) \).} \\
The induced \( (b_i)z_4z_3z_2 \) exists. Suppose \( z_2 \sim a_1 \). 
Since \( t \ge 3 \), \( z_2 \) has another neighbor \( f_3 \in N^3(v_p) \) or \( f_2 \in N^2(v_p) \). 
If \( z_2 \sim f_3 \in N^3(v_p) \) and \( f_3 \sim b_3 \in N^4(v_p) \cup N^3(v_p) \), then \( z_2 \nsim \) \( b_{i\neq3} \in N^4(v_p) \cup N^3(v_p) \) (by \( C_4 \)-free). 
If \( z_2 \sim f_2 \in N^2(v_p) \) and \( f_2 \sim b_1 \in N^3(v_p) \), then \( f_2 \nsim \) \( b_{i\neq1} \in N^3(v_p) \) (by \( C_4 \)-free). 
Hence we find the induced \( (b_1,\dots,b_{t-1})z_4z_3z_2 f_i a_1v_p...v_1 \).

\noindent \textbf{Case 4: \( z_k \in N^{\ge 5}(v_p) \).} \\
Suppose \( z_2 \sim a_1 \).The induced \( (b_i)z_5z_4z_3z_2a_1 \) exists.  
If the third neighbor of \( z_3 \) belongs to \( N^2(v_p) \), then it is non-adjacent to \( z_{i\neq3}\) (by \( C_4 \)-free and \emph{$p$-flag}-free) and to the other mentioned vertices. 
If \( z_3 \sim f_3 \in N^3(v_p) \), then \( f_3 \) can only be adjacent to one \( b_i \in N^4(v_p) \). 
If \( z_3 \sim f_4 \in N^4(v_p) \) and \( f_4 \sim  b_1 \in N^4(v_p) \), then \( f_4 \nsim  b_{i\neq1} \in N^4(v_p) \cup N^5(v_p) \) (by \( C_4 \)-free). 
Thus we obtain the induced \( (b_1,\dots,b_{t-1})z_5z_4z_3f_i z_2a_1...v_1 \). 
For higher layers the pattern continues, yielding a \(B^{+}(p+2, t-1)\) tree.
\end{proof}
  
From the fact that \(\chi(N^{\ge 2}(v_p)) \le \Delta(N^{\ge 2}(v_p)) + 1\) and by the definition of the balloon value, $\chi(N^{\ge 2}(v_p)) + \chi(v_p)$, then every $(p,t)$-balloon $Y$ in G has a maximum value of $R(t,\omega) + 1$. we obtain the following result.

\begin{corollary}\label{COR ba B+}
Let $t \ge 3$ and $p\ge 2$. 
If $G \in \mathcal{H}$ is $B^{+}(p+2, t-1)$-free then there is no $(p,t)$-balloon in $G$ with value $R(t,\omega)+2$ ($\varphi(t, \omega)+2$).
\end{corollary}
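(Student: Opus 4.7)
The plan is to derive the corollary directly from Lemma~\ref{lem ba B+} by a short greedy coloring argument applied to the second and later neighborhoods of $v_p$. Given any $(p,t)$-balloon $(P,Y)$ in $G$, let $Z$ be the set of vertices of $Y$ nonadjacent to $v_p$, so that $Z=\{v_p\}\cup\bigl(Y\cap N^{\ge 2}(v_p)\bigr)$ and, by definition, the value of $(P,Y)$ equals $\chi(G[Z])$. First I would invoke Lemma~\ref{lem ba B+} to conclude that $\Delta\bigl(Y[N^{\ge 2}(v_p)]\bigr)\le R(t,\omega)-1$, and then apply the standard bound $\chi(H)\le\Delta(H)+1$ to get $\chi\bigl(Y[N^{\ge 2}(v_p)]\bigr)\le R(t,\omega)$. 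Since $v_p$ is by construction nonadjacent to every vertex of $Y\cap N^{\ge 2}(v_p)$, it is an isolated vertex in $G[Z]$, so adding it contributes at most one further color, giving $\chi(G[Z])\le R(t,\omega)+1$. Consequently no $(p,t)$-balloon in $G$ attains value $R(t,\omega)+2$, which is the first assertion.

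For the sharper bound with $\varphi(t,\omega)$ in place of $R(t,\omega)$, I would revisit the single Ramsey step inside the proof of Lemma~\ref{lem ba B+}: a set of size $R(t,\omega)$ was used only to extract an independent $t$-set $B=(b_1,\dots,b_t)$ inside $N(z_k)\cap N^{\ge 2}(v_p)$ (a clique of size $\omega$ there would extend through $z_k$ to a forbidden $K_{\omega+1}$). Because $G\in\mathcal{H}$ is hereditarily $C_4$-free, the induced subgraph on $N(z_k)\cap N^{\ge 2}(v_p)$ is also $C_4$-free, so Theorem~\ref{TH PHI} allows us to substitute the $C_4$-restricted Ramsey number $\varphi(t,\omega)$ for $R(t,\omega)$ in the hypothesis on the degree. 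Replaying the identical case analysis of Lemma~\ref{lem ba B+} verbatim then yields $\Delta\bigl(Y[N^{\ge 2}(v_p)]\bigr)<\varphi(t,\omega)$, and the greedy bound above gives balloon value at most $\varphi(t,\omega)+1$, completing the second assertion.

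The step I expect to require the most care is the bookkeeping for the value $\chi(G[Z])$: since $v_p$ is actually isolated in $G[Z]$, the correct inequality is $\chi(G[Z])\le \chi\bigl(Y[N^{\ge 2}(v_p)]\bigr)+1$, rather than a literal sum of two chromatic numbers; stated this way the bound remains valid even when $Y\cap N^{\ge 2}(v_p)$ is empty and when its chromatic number coincides with that of a single vertex. Beyond that small subtlety, the argument is essentially a one-line corollary of Lemma~\ref{lem ba B+} together with the hereditary $C_4$-free property of $\mathcal{H}$, so no additional structural case analysis beyond what is already inside Lemma~\ref{lem ba B+} is needed.
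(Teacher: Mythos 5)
Your argument is correct and is essentially the paper's own: the paper obtains the corollary from Lemma~\ref{lem ba B+} by exactly the greedy bound $\chi\le\Delta+1$ applied to $Y[N^{\ge 2}(v_p)]$, plus one additional colour for the (isolated) vertex $v_p$ in $Z$, giving balloon value at most $R(t,\omega)+1$. Your second paragraph, which justifies the parenthetical $\varphi(t,\omega)$ variant by rerunning the Ramsey extraction of the independent $t$-set inside the $C_4$-free set $N(z_k)\cap N^{\ge 2}(v_p)$ (where a clique of size $\omega$ would extend through $z_k$ to a forbidden $K_{\omega+1}$), correctly supplies a detail the paper states without proof.
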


\vspace{0.5em}

\begin{theorem}\label{TH 1 B+}
For all $d \geq 1$, $p\ge 2$, and every $B^{+}(p+2, t-1)$, there exists a polynomial $f$ such that for all $t \ge 3$, if a graph \(G \in \mathcal{H}\) does not contain $K_d(t)$ as a subgraph and \(B^{+}(p+2, t-1)\)-free, then we have $\chi(G) \le f(t)$.
\end{theorem}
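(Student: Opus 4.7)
My plan is to apply the coloring inequality of Lemma \ref{lem-mgun} with parameters $s$ and $q$ extracted from the structural hypotheses at hand. The first ingredient is the easy observation that, since $G$ contains no $K_d(t)$ as a subgraph and $K_{dt}$ itself contains a copy of $K_d(t)$, every clique of $G$ has fewer than $dt$ vertices; hence $\omega(G) \le dt - 1$. This is the key step that will ultimately convert a bound polynomial in $\omega$ into one polynomial in $t$.

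The next step is to bound the value of any $t$-biclique in $G$. Because $G$ is $C_4$-free (in the subgraph sense underlying the Ramsey number $\varphi$), any two vertices of $G$ share at most one common neighbor. Hence if $(X,Y)$ is a $t$-biclique with $t \ge 3$, any two distinct vertices of $Y$ would share all $t$ vertices of $X$ as common neighbors; since $t \ge 3 > 1$, this contradicts $C_4$-freeness. Therefore $|Y| \le 1$, so the biclique value $\chi(Y) \le 1$, and I may take $s = 2$. For the balloon parameter, Corollary \ref{COR ba B+} directly supplies that no $(p,t)$-balloon in $G$ has value $\varphi(t,\omega)+2$, so I set $q = \varphi(t,\omega) + 2$.

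Plugging these into Lemma \ref{lem-mgun} yields
$$
\chi(G) \le \Bigl(\sum_{i=0}^{p-1} t^i\Bigr)\bigl(2 + t(2t+9)\bigr) + t^p\bigl(\varphi(t,\omega) + 2\bigr).
$$
Applying Theorem \ref{TH PHI} in the explicit form $\varphi(t,\omega) \le \binom{t}{2}(\omega-1) + t$ and substituting $\omega \le dt-1$ then gives $\varphi(t,\omega) \le \binom{t}{2}(dt-2) + t$, a polynomial in $t$ once $d$ is fixed. The right-hand side therefore becomes a polynomial $f(t)$ of degree at most $p+3$ whose coefficients depend on $d$ and $p$, delivering $\chi(G) \le f(t)$ as required. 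The main delicate point I anticipate is verifying, cleanly and from first principles, that the biclique hypothesis holds with $s = 2$ and that Corollary \ref{COR ba B+} is correctly packaged into the balloon hypothesis with $q = \varphi(t,\omega)+2$; once these are in place, the remainder is routine algebraic substitution.
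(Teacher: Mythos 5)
Your balloon step is sound and matches the paper: Corollary~\ref{COR ba B+} does supply $q=\varphi(t,\omega)+2$, and combining $\omega(G)\le dt-1$ with the explicit bound on $\varphi$ is exactly how the paper turns the balloon value into a polynomial in $t$. The gap is in the biclique step. Your claim that any two vertices of $G$ share at most one common neighbour is valid only when $C_4$ is excluded as a (not necessarily induced) subgraph; the paper's convention, stated in the introduction, is that $H$-free means no \emph{induced} copy of $H$, and the class $\mathcal{H}$ is defined accordingly. Under that convention your conclusion $|Y|\le 1$ is false: $K_{t+2}$ is (induced) $C_4$-free and $p$-flag-free, contains no $K_d(t)$ subgraph for $d\ge 2$ and $t\ge 3$, and contains a $t$-biclique $(X,Y)$ with $|Y|=2$. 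More generally, two vertices $y_1,y_2\in Y$ sharing the $t$ common neighbours in $X$ produce an \emph{induced} $C_4$ only when one can pick a nonadjacent pair in $\{y_1,y_2\}$ and a nonadjacent pair in $X$; when $X$ is a clique, $C_4$-freeness places no such restriction on $Y$, and $\chi(Y)$ cannot be bounded by an absolute constant such as $s=2$.

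This is precisely why the paper argues by induction on $d$, which your proposal omits entirely. Letting $g$ be the polynomial obtained for $d-1$ (with the trivial base case $d=1$, where the absence of $K_1(t)$ forces $|G|<t$), a $t$-biclique $(X,Y)$ of value $g(t)+1$ would force $G[Y]$ --- which is again in $\mathcal{H}$ and $B^{+}(p+2,t-1)$-free --- to contain $K_{d-1}(t)$ as a subgraph by the contrapositive of the inductive hypothesis, and together with the $t$ vertices of $X$ this yields a $K_d(t)$ subgraph of $G$, a contradiction. So the correct biclique parameter in Lemma~\ref{lem-mgun} is $s=g(t)+1$, not $s=2$, and the final polynomial $f$ is built recursively from $g$. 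Without this induction there is no way to control $\chi(Y)$, and your application of Lemma~\ref{lem-mgun} does not go through.
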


\begin{proof}
We prove the theorem by induction on $d$. If $d = 1$, then the result holds trivially since graphs without $K_1(t)$ have fewer than $t$ vertices. Assume $d > 1$ and the result holds for $d-1$ and let $g(t)$ be the corresponding polynomial.
Define
\[
f(t) = (\sum_{i=0}^{p-1} t^i)\left(g(t) + 1 + t(2t + 9)\right) + t^{p} (\binom{t}{2}(dt - 1) + t +2).
\]

Let \(G \in \mathcal{H}\) and \(B^{+}(p+2, t-1)\)-free and does not contain $K_{d}(t)$. Then we  show that $\chi(G) \leq f(t)$.
The graph $G$ cannot have a balloon of size 
$\binom{t}{2}(dt - 1) + t + 2$, since $\omega < dt$ and by Theorem~\ref{TH PHI}, 
it follows that $G$ has a balloon of size $\varphi(t, \omega) + 2$, 
which contradicts the result of~\ref{COR ba B+}.

 $G$ contains no $t$-biclique of value $g(t) + 1$. since If $(X, Y)$ is such a $t$-biclique, from the inductive hypothesis $G[Y]$ contains $K_{d-1}(t)$, hence $G$ contains $K_{d}(t)$, a contradiction.

So by lemma~\ref{lem-mgun}, we obtain
\[
f(t) = (\sum_{i=0}^{p-1} t^i)\left(g(t) + 1 + t(2t + 9)\right) + t^{p} (\binom{t}{2}(dt - 1) + t +2).
\]

Thus the proof is completed.
\end{proof}
  
\subsubsection{$S(1^{(t-4)},4 ,p)$-free }

\begin{lemma}\label{lem ba S+}
Let $d \geq 1$,\(t \ge 5,p \ge 1\) and Assume that \(G \in \mathcal{H}\) is \(S(1^{(t-4)}, 4, p)\)-free.  
If \(G\) does not contain \(K_d(t)\) as a subgraph, then every \((p,t)\)-balloon \(Y\) has a maximum value of \(dt + 2\).
\end{lemma}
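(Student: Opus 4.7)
The plan is to argue by contradiction: suppose there is a $(p,t)$-balloon $(P,Y)$ in $G$ whose value exceeds $dt+2$. Because every neighbor of $v_p$ in $Y$ lies in $Y\setminus Z$, the vertex $v_p$ is isolated in $G[Z]$, so $\chi\bigl(G[Y\cap N^{\geq 2}(v_p)]\bigr)\geq dt+3$. Applying Brooks' theorem to this subgraph yields two cases: either it contains a clique of order at least $dt+3$, in which case any partition of $dt$ of its vertices into $d$ classes of size $t$ realizes $K_d(t)\subseteq G$, contradicting the hypothesis; or some vertex $z\in Y\cap N^{\geq 2}(v_p)$ has degree at least $dt+2$ inside this subgraph. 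We pursue the latter case.

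Next I would record the local structure around $v_p$ forced by the balloon axioms. By $t$-connectivity $v_p$ has at least $t$ neighbors $a_1,\ldots,a_{t'}$ in $Y$, and they are pairwise non-adjacent because otherwise the triangle $v_pa_ia_j$ together with the induced balloon sub-path $v_pv_{p-1}\cdots v_1$ would form a flag forbidden by $\mathcal H$. Similarly, each $a_i$ has at least $t-1$ further neighbors in $Y$, all lying in $N^2(v_p)\cap Y$ by $C_4$-freeness (which forces a unique parent in $N(v_p)$); denote this set $C_i$. A final application of $p$-flag-freeness to the putative triangle $a_icc'$ with attached path $a_iv_pv_{p-1}\cdots v_1$ of length $p$ shows that $C_i$ is independent.

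The core of the argument is to assemble an induced $S(1^{(t-4)},4,p)$ in $G$ from these ingredients, contradicting the hypothesis. Take $a_1$ as the central vertex $u_0$: the balloon provides the induced $p$-arm $a_1{-}v_p{-}v_{p-1}{-}\cdots{-}v_1$ of length exactly $p$, any $t-4$ members of $C_1$ serve as the pendant leaves (and since $|C_1|\geq t-1$, at least three members remain unused), and one unused member of $C_1$ is designated $e_1$, the first vertex of the $4$-arm $a_1{-}e_1{-}e_2{-}e_3{-}e_4$. The vertices $e_2,e_3,e_4$ are selected successively from the $t$-connected neighborhoods inside $Y\cap N^{\geq 2}(v_p)$; here the assumption that $z$ has $\geq dt+2$ neighbors in that set provides the room to do so. One organises the argument into cases according to the layer $N^k(v_p)$ containing $z$, in analogy with the four cases of Lemma~\ref{lem ba B+}, and in each case threads the $4$-arm through the vicinity of $z$.

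The main obstacle will be verifying induced-ness of the resulting tree. We must rule out every chord of the $4$-arm, every adjacency between the $4$-arm and either the pendants or the balloon path, and every adjacency between $a_1$ and $e_2,e_3,e_4$. Each such forbidden adjacency is eliminated either by $C_4$-freeness (for instance $a_1\nsim e_3$, since otherwise $a_1e_1e_2e_3$ is a $4$-cycle, and any single pendant can be joined to at most one vertex of the $4$-arm by the same argument) or by $p$-flag-freeness (for instance $a_1\nsim e_2$, since otherwise the triangle $a_1e_1e_2$ with the balloon path of length $p$ attached at $a_1$ gives a $p$-flag, and analogously for $e_1\nsim e_3$, $e_2\nsim e_4$, etc.). Since the total number of forbidden positions is only linear in $t$, while the degree of $z$ in $Y\cap N^{\geq 2}(v_p)$ exceeds $dt$, a valid choice of $4$-arm always exists, producing the induced $S(1^{(t-4)},4,p)$ and the desired contradiction. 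The careful layer-by-layer book-keeping is the technical heart of the proof.
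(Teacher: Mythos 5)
Your overall strategy (extract a high-chromatic structure inside $Y\cap N^{\ge 2}(v_p)$ and assemble an induced $S(1^{(t-4)},4,p)$) is reasonable, and your preliminary observations are sound: $\omega(G)<dt$, the neighbours $a_1,\dots,a_{t'}$ of $v_p$ in $Y$ form an independent set by $p$-flag-freeness, each $a_i$ has at least $t-1$ further neighbours in $N^2(v_p)\cap Y$ by $C_4$-freeness, and each $C_i$ is independent. But the core step --- producing the induced $4$-arm --- has a genuine gap. You obtain a single vertex $z\in Y\cap N^{\ge 2}(v_p)$ of degree at least $dt+2$ and then claim that this ``provides the room'' to choose $e_2,e_3,e_4$ extending $e_1\in C_1$, organising cases by the layer containing $z$. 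This does not cohere: $z$ is an arbitrary vertex of $Y\cap N^{\ge 2}(v_p)$ and may lie in a layer $N^{k}(v_p)$ with $k\ge 6$, out of reach of any arm of length $4$ rooted at $a_1$; and the large degree of $z$ gives no information about the neighbourhood structure of $e_1,e_2,e_3$, which are the vertices you actually need to extend from. Your closing count (``forbidden positions linear in $t$ versus degree of $z$ exceeding $dt$'') compares quantities attached to different vertices, so it does not certify that a valid arm exists. If you want to grow the arm greedily, the resource available at each of $e_1,e_2,e_3$ is only the minimum degree $\ge t$ in $Y$ coming from $t$-connectivity, and you must verify at each step that the excluded neighbours (those adjacent to $a_1$, to a pendant, to an earlier $e_j$, or to some $v_i$) number fewer than $t$; that verification, which needs both $C_4$-freeness and $p$-flag-freeness to kill common neighbours, is the technical heart of the lemma and is entirely missing.

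For comparison, the paper reaches the star by a different mechanism: from $\chi(Y[N^{\ge2}(v_p)])>dt>\omega(G)$ and the fact that $P_4$-free graphs satisfy $\chi=\omega$, it extracts an induced $P_4$ inside $Y[N^{\ge2}(v_p)]$, connects that $P_4$ to $v_p$ through the lowest layer meeting it (a case analysis on how many of its four vertices lie in that layer), and centres the star at $v_p$ itself, with the surviving $a_j$ as pendants and the balloon path as the $p$-arm. Your Brooks/maximum-degree reduction and your $a_1$-centred construction are genuinely different choices, but as written they do not connect to each other; to repair the argument you should either substitute the induced-$P_4$ extraction for the vertex $z$, or carry out the explicit degree count sketched above.
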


\begin{proof}
Consider a \((p,t)\)-balloon \(Y\) with the common vertex \(v_p\) between the path \(P\) and \(Y\) and  $(a_1, \dots, a_{t'})\in N(v_p)$, where $t' \ge t$.  
We show that \(\chi(Y[N^{\ge 2}(v_p)]) \le dt\).
Suppose, for contradiction, that \(\chi(Y[N^{\ge 2}(v_p)]) > dt\). 
 
Since $\omega < dt$, we have $\chi(Y[N^{\ge 2}(v_p)]) > \omega$.  
Given that $P_4$-free graphs are well-known to satisfy $\chi(G) = \omega(G)$, it follows that an induced path $P_4$, $u_1$-$u_2$-$u_3$-$u_4$, exists in $Y[N^{\ge 2}(v_p)]$.

Let \(j \ge 1\) be the first layer below \(P_4\) containing a vertex \(s_j\) adjacent to a vertex of \(P_4\) in layer \(j+1\) (If \(s_j \in N(v_p)\) then there exists at least one vertex of $P_4$ that belongs to layer~2 ).

\begin{enumerate}
    \item If there is only one vertex of \(P_4\) in layer \(j+1\), say \(u_l\), then \(s_j \sim u_l\).  
    In any case, there exist two non-adjacent vertices in \(P_4\) in layer \(j+2\) or above, so an induced path \(s_j-u_l-u_{l+1}-u_{l+2}\) exists and for each $s_j$, there exists an induced path from $s_j$ to $v_p$.
    
\begin{tikzpicture}[baseline=(current bounding box.center)]
\node[anchor=south] at (0,0)   {\pathverticalfour};
\node[anchor=south] at (2,0)   {\pathHorizontalRed};
\node[anchor=south] at (4,0)   {\pathVshapeHorizontal};
\node[anchor=south] at (6,0)   {\pathLshape};
\node[anchor=south] at (8,0)   {\pathVshape};
\end{tikzpicture}
    \item If there are exactly two vertices of \(P_4\) in layer \(j+1\):
    \begin{enumerate}
        \item Two non-adjacent vertices \(u_1, u_3\): \(s_j\) cannot be adjacent to both (since \(C_4\)-free), hence the induced path \(s_j-u_1-u_2-u_3\) exists.   \mbox{ \pathZigzag } 
        \item Two non-adjacent vertices \(u_1, u_4\): even if \(s_j\) is adjacent to both, the induced path \(s_j-u_1-u_2-u_3\) exists.  \pathCccshape
        \item Two adjacent vertices \(u_1, u_2\): some \(s_j \sim u_1\) (and $s_j \not\sim u_2$), giving \(s_j-u_1-u_2-u_3\).
           \mbox{\pathCshape} \qquad   
           \mbox{\pathLshapeNew} \qquad
        \item Two adjacent vertices \(u_2, u_3\): some \(s_j \sim u_3\), giving \(s_j-u_3-u_2-u_1\). \pathCcshape
    \end{enumerate}

    \item If there are exactly three vertices of \(P_4\) in layer \(j+1\):
    \begin{enumerate}
        \item Three vertices \(u_1, u_2, u_3\): \(s_j \sim u_1\) gives \(s_j-u_1-u_2-u_3\). \pathGammaShape
        \item Three vertices \(u_1, u_3, u_4\): \(s_j \sim u_4\) gives \(s_j-u_4-u_3-u_2\).  \pathPeakShape
    \end{enumerate}

    \item If all four vertices of \(P_4\) are in layer \(j+1\) and \(s_j \sim u_1\), then the induced path \(s_j-u_1-u_2-u_3\) exists, even if \(s_j \sim u_4\). \pathfour

\end{enumerate}

In the above cases, even if \(s_j \in N(v_p)\) (say \(s_j = a_i\)), we obtain an induced path of length 4: \(v_p-a_i-u_l-u_{l+1}-u_{l+2}\).

If \(s_j \in N^{\ge 2}(v_p)\), then at least \(t-1\) vertices adjacent to \(v_p\) in the balloon are not adjacent to the induced paths found.  
If \(s_j \in N^1(v_p)\), even with maximum adjacency to \(P_4\), at most 4 vertices from \(N(v_p)\) are involved, so \(t-4 > 0\) vertices remain.  

Thus, a 2-arm star tree is induced.  
Consequently, \(\chi(Y[N^{\ge 2}(v_p)]) \le dt\), and therefore, the balloon value is at most \(dt + 1\).
\end{proof}

\begin{theorem}\label{TH 2 S+}
For all $d,p \geq 1$ and every $ S(1^{(t-4)}, 4, p)$ , there exists a polynomial $f$ such that for all $t \geq 5$, if a graph \(G \in \mathcal{H}\) is 
$S(1^{(t-4)}, 4, p)$-free and does not contain $K_d(t)$ as a subgraph, then $\chi(G) \leq f(t)$.
\end{theorem}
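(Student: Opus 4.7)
The plan is to mirror the induction-on-$d$ argument of Theorem~\ref{TH 1 B+}, replacing Corollary~\ref{COR ba B+} with Lemma~\ref{lem ba S+} as the source of balloon control. Since Lemma~\ref{lem ba S+} already converts the $S(1^{(t-4)},4,p)$-free hypothesis into a bound on $(p,t)$-balloon values that is polynomial in $t$, the remaining work reduces to a direct application of Lemma~\ref{lem-mgun}.

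First I would dispose of the base case $d=1$: any graph with no $K_1(t)$ subgraph has fewer than $t$ vertices, so $\chi(G) < t$ and any polynomial with positive leading coefficient dominates.

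For the inductive step, assume the conclusion for $d-1$ with polynomial bound $g(t)$, and define
\[
f(t) = \left(\sum_{i=0}^{p-1} t^i\right)\!\bigl(g(t) + 1 + t(2t+9)\bigr) + t^p(dt+3),
\]
which is polynomial in $t$. I would then apply Lemma~\ref{lem-mgun} with $q = dt+3$ and $s = g(t)+1$. Lemma~\ref{lem ba S+} directly rules out $(p,t)$-balloons of value at least $dt+3$. To rule out $t$-bicliques $(X,Y)$ of value $g(t)+1$, observe that $\mathcal{H}$ and $S(1^{(t-4)},4,p)$-freeness are both hereditary, so $\chi(Y) > g(t)$ would force, by the inductive hypothesis applied to $G[Y]$, that $K_{d-1}(t) \subseteq G[Y]$; together with the $t$ vertices of $X$ (each fully adjacent to $Y$) this produces $K_d(t) \subseteq G$, contradicting the hypothesis. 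Lemma~\ref{lem-mgun} then yields $\chi(G) \le f(t)$.

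I do not anticipate a genuine obstacle: Lemma~\ref{lem ba S+} has already absorbed the structural content of the $S(1^{(t-4)},4,p)$-free hypothesis, and both the heredity of the excluded-subgraph class and the biclique-to-$K_d(t)$ step are routine. The only care required is bookkeeping of the additive constant in $q$ to match the exact bound delivered by Lemma~\ref{lem ba S+} (which at most shifts $q$ by a bounded amount and leaves $f$ polynomial in $t$).
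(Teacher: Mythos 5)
Your proposal is correct and follows essentially the same route as the paper: the paper's own proof is a one-line reduction to the argument of Theorem~\ref{TH 1 B+}, performing induction on $d$ and feeding the balloon bound from Lemma~\ref{lem ba S+} and the biclique bound from the inductive hypothesis into Lemma~\ref{lem-mgun}. Your choice $q = dt+3$ is in fact the careful reading of Lemma~\ref{lem ba S+} (whose stated maximum value is $dt+2$, so no balloon has value at least $dt+3$), whereas the paper loosely says it ``substitutes the value $dt+2$''; this off-by-one does not affect polynomiality and your bookkeeping remark already covers it.
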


\begin{proof}
Similar to Theorem~\ref{TH 1 B+}, the proof uses induction on $d$ and substitutes the value $dt + 2$, which Lemma~\ref{lem ba S+} proves is impossible for any $(p,t)$-balloon.
\end{proof}

\noindent
\tikz[baseline=-0.7ex]{\filldraw[black] (0,0) circle (2pt);}\hspace{0.5em}
{\large\bfseries The Class that does not contain $K_3(t)$ as a subgraph}

\vspace{0.5em}

\begin{lemma}\label{lem CLIQ B+}
Let $ p \geq 2 $ and $ t \geq 3 $. If $ G $ is a graph that does not contain $ K_3(t) $ as a subgraph, and $ B^+(p+2, t-1) $-free, then no $ (X, Y) $, $ t $-biclique with value greater than
$
2 + \left(  \sum_{i=0}^{p} t^{i+2} \right)^{(p+3)! \cdot \sum_{i=0}^{p} t^{i}}
$
exists.
\end{lemma}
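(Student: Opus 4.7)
The plan is to apply Theorem~\ref{theo-scsys} to $G[Y]$ with a carefully chosen rooted tree whose parameters generate exactly the stated bound. I argue by contradiction: suppose $(X, Y)$ is a $t$-biclique of value strictly greater than $2 + (\sum_{i=0}^{p} t^{i+2})^{(p+3)! \cdot \sum_{i=0}^{p} t^{i}}$. Because $G$ has no $K_{3}(t)$-subgraph and $X$ is a $t$-set completely joined to $Y$, I first observe that $G[Y]$ also has no $K_{t,t}$-subgraph: any disjoint $t$-sets $A, B \subseteq Y$ forming a $K_{t,t}$ in $G[Y]$ would, together with $X$, furnish the three independent $t$-sets of a $K_{t,t,t} = K_{3}(t)$ in $G$.

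Next, I take $(H, r)$ to be the $(t, p)$-uniform rooted tree, so that $|H| = \sum_{i=0}^{p} t^{i}$, height $\eta = p$, and spread $\zeta = t$. With these choices $|H| \zeta t = \sum_{i=0}^{p} t^{i+2}$ and $(\eta + 3)! \, |H| = (p + 3)! \cdot \sum_{i=0}^{p} t^{i}$, so once $G[Y]$ is shown to be $(H, r)$-free as an induced rooted subtree, Theorem~\ref{theo-scsys} applied to $G[Y]$ (which is $K_{t,t}$-subgraph-free by the first step) gives
\[
\partial(G[Y]) \le \Bigl(\sum_{i=0}^{p} t^{i+2}\Bigr)^{(p+3)! \cdot \sum_{i=0}^{p} t^{i}},
\]
and the inequality $\chi(G[Y]) \le \partial(G[Y]) + 1$ then contradicts the assumed biclique value.

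The bulk of the work is therefore the embedding claim: an induced copy of $(H, r)$ in $G[Y]$ must produce an induced $B^{+}(p+2, t-1)$ in $G$, contradicting the hypothesis. Fix an embedding with root $y_{0} = r$, a distinguished root-to-leaf path $y_{0}, y_{1}, \dots, y_{p}$, and for each non-leaf $y_{k}$ denote the other $t - 1$ children by $s_{k, 1}, \dots, s_{k, t - 1}$. I would set $u_{0} := y_{p-1}$ with its $t - 1$ pendant leaves $v_{j} := s_{p - 1, j}$, then trace the $B^{+}$-path by ascending to the root via $u_{i} := y_{p - 1 - i}$ for $1 \le i \le p - 1$, then descending into a separate subtree by taking $u_{p} := s_{0, 1}$ (a sibling of $y_{1}$) and choosing $u_{p+1}, u_{p+2}$ as successive descendants of $s_{0, 1}$ inside $H$; the extra pendant at $u_{2}$ is a side child of the corresponding ascending-path vertex. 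Because the $H$-embedding is induced, siblings in $H$ are non-adjacent and vertices lying in disjoint subtrees of $r$ have no edge between them, so a direct check confirms that the selected $t + p + 3$ vertices induce exactly $B^{+}(p+2, t-1)$ in $G[Y] \subseteq G$.

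I expect the induced-subgraph verification to be the main obstacle, especially at the boundary case $p = 2$, where the required $u_{p+2}$ would sit at depth $3$ inside an $H$ of height only $2$. There I would either relocate $u_{0}$ one level higher and recover the last path vertex from $Y \setminus V(H)$ by a color-class / criticality reduction exploiting the $K_{t, t}$-subgraph-freeness of $G[Y]$, or replace $(H, r)$ by an equally sized rooted tree of the same height and spread whose shape is tailored to the length-$4$ diameter of $B^{+}(4, t - 1)$. Once the $(H, r)$-freeness of $G[Y]$ is in hand, Theorem~\ref{theo-scsys} delivers the degeneracy bound, and the slack between the sharp ``$+1$'' coming from $\chi \le \partial + 1$ and the ``$+2$'' in the statement comfortably absorbs the rounding.
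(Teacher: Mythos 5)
Your proposal follows essentially the same route as the paper's own proof: observe that $G[Y]$ contains no $K_{t,t}$ subgraph (else $X$ together with the two sides of the $K_{t,t}$ yields a $K_3(t)$ subgraph), apply Theorem~\ref{theo-scsys} with the $(t,p)$-uniform rooted tree $H$ of height $p$ to bound $\partial(G[Y])$, and derive a contradiction by exhibiting $B^{+}(p+2,t-1)$ as an induced subtree of the induced copy of $H$. Your parameter bookkeeping ($|H|\zeta t=\sum_{i=0}^{p}t^{i+2}$ and $c=(p+3)!\sum_{i=0}^{p}t^{i}$) reproduces the stated constant, and your explicit embedding for $p\ge 3$ --- place the broom centre $u_0$ at depth $p-1$, take $t-1$ of its children as the pendant leaves, ascend to the root, then descend three levels into a sibling subtree, with the extra pendant hung off the ascending vertex $u_2$ --- is correct; it supplies a verification that the paper omits entirely (the paper merely asserts that $B^{+}(p+2,t-1)$ ``is a subgraph of $H$'').

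The one genuine gap is precisely the boundary case $p=2$ that you flag, and neither of your proposed repairs closes it. The tree $B^{+}(4,t-1)$ contains a path on six vertices (from a leaf $v_j$ through $u_0,u_1,u_2,u_3$ to $u_4$), so its diameter is at least $5$, while every rooted tree of height $2$ has diameter at most $4$; hence no height-$2$ tree, uniform or ``tailored,'' contains $B^{+}(4,t-1)$ as a subtree, which rules out your second fix outright. Your first fix (recovering the last path vertex from $Y\setminus V(H)$ via a criticality argument) is not developed and is not obviously available, since Theorem~\ref{theo-scsys} gives no control over how the induced copy of $H$ attaches to the rest of $Y$. You should be aware that the paper's own proof has exactly the same unacknowledged defect at $p=2$. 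A clean repair is to invoke Theorem~\ref{theo-scsys} with a uniform tree of height $\max(p,3)$, into which $B^{+}(p+2,t-1)$ does embed for all $p\ge 2$, at the cost of a larger (but for the applications still harmless) constant in the statement; alternatively one can simply restrict the lemma to $p\ge 3$.
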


\begin{proof}
Consider an $(X,Y)$, $t$-biclique. Note that $Y$ does not contain $K_{t,t}$ as a subgraph; 
otherwise, the $(X,Y)$, $t$-biclique would contain $K_3(t)$ as a subgraph.
By Theorem~\ref{theo-scsys}, if $\partial(G) > \left(  \sum_{i=0}^{p} t^{i+2} \right)^{(p+3)! \cdot \sum_{i=0}^{p} t^{i}}$
then \(Y\) contains an induced rooted uniform tree \(H\) with spread \(t$ and height $p$, satisfying $|H| = \sum_{i=0}^{p} t^i$. Therefore $B^+(p+2, t-1)$, being a subgraph of \(H\), appears as an induced subgraph in \(Y\). Thus, we have
\[
\chi(Y) \le 1 +  \left(  \sum_{i=0}^{p} t^{i+2} \right)^{(p+3)! \cdot \sum_{i=0}^{p} t^{i}}.
\]
\end{proof}

\noindent
\tikz[baseline=-0.7ex]{\filldraw[black] (0,0) circle (2pt);}\hspace{0.5em}
{\large\bfseries $t$ is fixed}

\vspace{0.5em}
  
\begin{theorem}\label{TH 3 K3T}
Let $ p \geq 2 $ and $ t \geq 3 $ be fixed integers. 
If $G \in \mathcal{H}$ is a graph that does not contain $K_3(t)$ as a subgraph, 
and $B^{+}(p+2, t-1)$-free, 
then there exists a linear function $f$ such that 
$\chi(G) \le f(\omega(G))$.
\end{theorem}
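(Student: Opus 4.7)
The plan is to apply the balloon/biclique machinery of Lemma~\ref{lem-mgun}, feeding in a balloon bound from Corollary~\ref{COR ba B+} and a biclique bound from Lemma~\ref{lem CLIQ B+}, then reading off linearity in $\omega := \omega(G)$ from the fact that $t$ and $p$ are fixed.

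First, I would bound the value of any $(p,t)$-balloon. Corollary~\ref{COR ba B+} excludes the existence of a $(p,t)$-balloon of value $\varphi(t,\omega)+2$, so each such balloon has value at most $q := \varphi(t,\omega)+1$. By Theorem~\ref{TH PHI}, together with the explicit estimate $\varphi(t,\omega)\le \binom{t}{2}(\omega-1)+t$ cited in the introduction, $q$ is a linear function of $\omega$ whose coefficients depend only on the fixed integer $t$.

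Second, I would bound the value of any $t$-biclique. Lemma~\ref{lem CLIQ B+} gives that no $t$-biclique has value exceeding
\[
s := 2 + \left(\sum_{i=0}^{p} t^{i+2}\right)^{(p+3)!\cdot \sum_{i=0}^{p} t^i}.
\]
Although $s$ is enormous as a function of $t$, it depends only on the fixed parameters $p$ and $t$, so it is a constant independent of $\omega$.

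Substituting $q$ and $s$ into Lemma~\ref{lem-mgun} gives
\[
\chi(G) \le \left(\sum_{i=0}^{p-1} t^i\right)\!\bigl(s + t(2t+9)\bigr) + t^{p}\,q,
\]
whose first summand is a constant in $\omega$ and whose second is linear in $\omega$ with slope $t^{p}\binom{t}{2}$; hence $\chi(G) \le A\,\omega(G) + B$ for constants $A, B$ depending only on $p$ and $t$, which is the desired linear $\chi$-binding function. The substance is already concentrated in the two preceding lemmas: Lemma~\ref{lem CLIQ B+} invokes the Scott--Seymour degeneracy bound (Theorem~\ref{theo-scsys}) to extract a rooted uniform tree from any large biclique, and Corollary~\ref{COR ba B+} combines $\mathcal{H}$-freeness with the Zhou et al.\ linear $C_4$-free Ramsey estimate to control balloons. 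Consequently, the main obstacle here is purely a bookkeeping one---recognizing that, once $t$ is fixed, the biclique bound $s$ contributes only an additive constant, so the sole $\omega$-dependent term comes from the balloon bound, whose linearity in $\omega$ is exactly what Theorem~\ref{TH PHI} delivers; were $t$ allowed to grow, this same scheme would only yield a super-polynomial bound.
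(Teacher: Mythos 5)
Your proposal is correct and follows essentially the same route as the paper: substitute the biclique bound from Lemma~\ref{lem CLIQ B+} and the balloon bound from Corollary~\ref{COR ba B+} into Lemma~\ref{lem-mgun}, then observe that with $p$ and $t$ fixed the only $\omega$-dependence is the linear term $\varphi(t,\omega)$ from Theorem~\ref{TH PHI}. The only discrepancy is a harmless off-by-one in the threshold $q$ fed into Lemma~\ref{lem-mgun} (the paper uses $\varphi(t,\omega)+2$, the value excluded by the corollary, rather than the maximum attainable value $\varphi(t,\omega)+1$), which does not affect linearity.
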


  \begin{proof}
  
  By substituting the values from Lemma~\ref{lem CLIQ B+} and corollary~\ref{COR ba B+} into Lemma~\ref{lem-mgun}, we obtain the following result.

\[
\chi(G) \le 
\Biggl(\sum_{i=0}^{p-1} t^i\Biggr)
\Bigl(
2 +  \left(  \sum_{i=0}^{p} t^{i+2} \right)^{(p+3)! \cdot \sum_{i=0}^{p} t^{i}}
+ t(2t+9)
\Bigr)
+ t^p (\varphi(t,\omega) + 2).
\]
Therefore, by Theorem~\ref{TH PHI}, we obtain the desired result.
\end{proof}

\medskip
\noindent\textbf{Remark.} The \emph{tree} $B^{+}(p+k, t-1)$, for $k ,p \ge 2 , t\ge 3$, is an induced tree that generalizes $B^{+}(p+2, t-1)$, 
with the only difference being that the pendant vertex is attached to the vertex ($u_k$), which is at distance $k$ from the central vertex.
Let \(\mathcal{F}_k\) ,$k\geq2$, be a subclass of \(\mathcal{H}\) such that in every \((p,t)\)-balloon, $(P,Y)$  where \(v_p\) is the common vertex of \(P\) and \(Y\),  
at least one of the vertices of maximum degree in \(Y[N^{\ge 2}(v_p)]\) lies within distance \(k\) from \(N(v_p)\).
Assume $G \in \mathcal{F}_k$ is \(B^{+}(p+k, t-1)\)-free. Let $z_k$ be a maximum-degree vertex in $N^{\ge 2}(v_p)$ 
residing in the $k$-th layer. Then, analogous to proof in  Lemma~\ref{lem ba B+}, supposing that $Y\left[(N(z_k) \cap N^{\geq 2}(v_p)\right]$
 contains an independent set of size $t$ leads to the existence of a \(B^{+}(p+k, t-1)\) graph, a contradiction.
Therefore, the statement of lemma~\ref{lem ba B+} holds for any graph belonging to $\mathcal{F}_k$ that is $B^{+}(p+k, t-1)$-free. 
As a consequence, Corollary~\ref{COR ba B+} and Theorem~\ref{TH 1 B+} also follow.

\end{document}